\title{Erd\H os and arithmetic progressions}
\author{W.T. Gowers}
\begin{document}

\newtheorem{theorem}{Theorem}[section]
\newtheorem{proposition}[theorem]{Proposition}
\newtheorem{lemma}[theorem]{Lemma}
\newtheorem*{claim}{Claim}
\newtheorem{corollary}[theorem]{Corollary}
\newtheorem{conjecture}[theorem]{Conjecture}
\newtheorem{definition}[theorem]{Definition}
\newtheorem{problem}[theorem]{Problem}
\newtheorem{example}[theorem]{Example}
\newtheorem{question}[theorem]{Question}
\newtheorem{remark}[theorem]{Remark}

\def\e{\epsilon}
\def\a{\alpha}
\def\b{\beta}
\def\d{\delta}
\def\cf{\mathcal{F}}
\def\E{\mathbb{E}}
\def\N{\mathbb{N}}
\def\R{\mathbb{R}}
\def\C{\mathbb{C}}
\def\Z{\mathbb{Z}}
\def\F{\mathbb{F}}
\def\hf{\hat f}

\onehalfspacing

\begin{abstract}
Two of Erd\H os's most famous conjectures concern arithmetic progressions. In this paper we discuss some of the progress that has been made on them.
\end{abstract}

\maketitle

\section{Introduction}

Possibly the best known of all of Erd\H os's many conjectures is the following striking statement.

\begin{conjecture} \label{sumofreciprocals}
Let $A$ be a set of positive integers such that $\sum_{n\in A}n^{-1}=\infty$. Then $A$ contains arbitrarily long arithmetic progressions.
\end{conjecture}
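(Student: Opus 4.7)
The plan is to deduce the conjecture from a suitably strong quantitative Szemer\'edi-type theorem: for each $k \geq 3$ there exists $c_k > 0$ such that every $A \subseteq \{1,\ldots,N\}$ containing no $k$-term arithmetic progression satisfies $|A| \leq N(\log N)^{-1-c_k}$. Granted such a bound, Conjecture \ref{sumofreciprocals} follows from a routine dyadic decomposition: writing $A_j = A \cap [2^j,2^{j+1})$, the hypothetical density bound gives $|A_j|/2^j \ll j^{-1-c_k}$, so that $\sum_{n \in A} n^{-1} \leq \sum_j |A_j|/2^j$ is finite --- contradicting the divergence of $\sum_{n \in A} n^{-1}$ and forcing arbitrarily long progressions in $A$.

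For $k=3$ I would follow the density-increment strategy of Roth, refined by Bloom and Sisask. Assuming $A \subseteq \{1,\ldots,N\}$ is 3-AP-free, the Fourier-analytic identity for the count of 3-APs forces $\hat 1_A$ to concentrate substantially on a Bohr set, and the Croot--Sisask almost-periodicity lemma then converts this spectral concentration into a genuine long arithmetic progression on which the relative density of $A$ exceeds its global density by a small absolute amount. Iterating, with careful control on how the length of the ambient progression shrinks at each step, delivers a bound of the form $|A| \ll N(\log N)^{-1-c}$, which is precisely what the dyadic argument needs.

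For $k \geq 4$ the same template applies in spirit: one hopes to show that being $k$-AP-free forces correlation with a structured object, and to turn that correlation into a density increment on a long sub-progression. The relevant notion of structure is now a nilsequence, and the Fourier step must be replaced by the inverse theorem for the Gowers $U^{k-1}$ norm of Green, Tao and Ziegler. The hard part will be quantitative: every known proof of the $U^{k-1}$ inverse theorem loses at least one tower of exponentials, and so the best current bounds for $k$-AP-free sets are of the shape $|A| \ll N(\log\log N)^{-c_k}$, which is far too weak to drive the dyadic argument above. Obtaining a polylogarithmic-quality inverse theorem for the higher Gowers norms --- the natural higher-order analogue of the recent polynomial Freiman--Ruzsa-type advances in the finite-field setting --- is, in my view, the principal technical obstruction standing between the present state of the art and a full resolution of the conjecture.
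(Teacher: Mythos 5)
The statement you are ``proving'' is an open conjecture, and the paper does not --- and cannot --- prove it; it explicitly records that it is not even known whether such a set $A$ must contain a three-term progression. What you have written is therefore not a proof but a (largely correct) account of the standard reduction and of the quantitative state of the art, and it is worth being clear that that is its status.

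Your dyadic reduction is sound and matches the paper's own observation: the paper notes that a density threshold of $n/(\log n\,(\log\log n)^2)$ in Szemer\'edi's theorem would suffice, and your requested bound $N(\log N)^{-1-c_k}$ is a mild strengthening of the same thing, chosen so that $\sum_j j^{-1-c_k}$ visibly converges. For $k=3$, the density-increment scheme you sketch --- Bohr-set spectral concentration combined with Croot--Sisask almost periodicity --- has in fact been pushed past the logarithmic barrier (by Bloom and Sisask, and more dramatically by Kelley and Meka), so the $k=3$ case of Erd\H{o}s's conjecture is now settled; the paper predates those results, which is why it still lists $k=3$ as open. For $k\geq 4$, however, you yourself concede the fatal gap: the quantitative losses in every known proof of the $U^{k-1}$ inverse theorem leave the best bounds at roughly $(\log\log N)^{-c_k}$, far short of the $N/\log N$ threshold the dyadic argument needs. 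Until that gap is closed, what you have written is a research programme, not a proof, and no amount of ``in spirit'' can substitute for the missing quantitative inverse theorem.
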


\noindent This conjecture is still wide open. Indeed, it is not even known whether $A$ must contain an arithmetic progression of length 3. 

There is another conjecture of Erd\H os about arithmetic progressions. It is not as famous as the first, but it is still well known and extremely interesting. It is sometimes referred to as \textit{Erd\H os's discrepancy problem}.

\begin{conjecture} \label{discrepancy}
Let $\e_1,\e_2,\e_3,\dots$ be a sequence taking values in the set $\{-1,1\}$. Then for every constant $C$ there exist positive integers $n$ and $d$ such that $|\sum_{m=1}^n\e_{md}|\geq C$.
\end{conjecture}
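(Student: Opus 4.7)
The plan is to argue by contradiction: suppose $(\e_n)$ is a $\pm 1$ sequence with $\bigl|\sum_{m=1}^n \e_{md}\bigr| \leq C$ for all positive integers $n,d$, and try to derive a contradiction by extracting enough structural information about $(\e_n)$ to rule out such uniform boundedness along the homogeneous arithmetic progressions (HAPs).

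The natural first target is the completely multiplicative case, since if $\e$ is completely multiplicative then $\sum_{m=1}^n \e_{md} = \e_d \sum_{m=1}^n \e_m$, so boundedness along every HAP collapses to boundedness of the single sequence of partial sums $\sum_{m \leq n}\e_m$. Even this reduced statement is nontrivial: it says that no completely multiplicative $\pm 1$-valued function can have bounded partial sums. One would attack it via mean-value theorems for multiplicative functions in the style of Hal\'asz --- such an $\e$ would have to ``pretend'' to be a Dirichlet character, and the rigidity of being strictly $\pm 1$-valued (rather than $\{-1,0,1\}$-valued, as a genuine quadratic character is) should force a contradiction.

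The heart of the general proof would then be a reduction of the arbitrary case to this multiplicative model. I would try this by passing to a logarithmically averaged ergodic limit of $\e$ and exploiting the multiplicative invariance of HAP sums: the dilation $n \mapsto pn$ sends HAPs to HAPs, so any suitable weak-$*$ limit of shifts of $\e$ ought to carry a nontrivial action of the multiplicative semigroup of $\N$. An entropy-decrement argument along the primes $p$ should then allow one to convert this limiting dilation invariance into an actual identity on two-point correlations, producing from $\e$ a (morally) multiplicative counterexample, which the previous step refutes.

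The dominant obstacle is the analytic input needed to drive that entropy decrement: one requires a correlation estimate to the effect that the logarithmically averaged two-point correlations $\E_n f(n)f(n+h)$ of a completely multiplicative $f:\N\to\{-1,+1\}$ vanish on average in $h$. This is essentially a case of the Elliott conjecture on correlations of multiplicative functions and is itself a deep problem in analytic number theory; without input of at least this strength the whole reduction to the multiplicative case breaks down, and one is left only with finitary, SAT-solver-style methods (as in Polymath~5) that can prove the discrepancy exceeds any prescribed constant but cannot reach unboundedness.
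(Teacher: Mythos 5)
This statement is not proved in the paper: it is stated there as an open conjecture (the Erd\H os discrepancy problem), and what follows in the survey is a discussion of partial results and candidate lines of attack --- long discrepancy-$2$ sequences, Tao's Fourier reduction to the completely multiplicative case, Charikar's semidefinite-programming reformulation, and the HAP-matrix representation --- not a proof. So there is no argument in the paper to compare yours against.

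Your sketch is, as it happens, a reasonable pr\'ecis of the strategy Tao used to resolve the conjecture in 2015, after the survey appeared, so the high-level plan is historically vindicated. But as written it has real gaps. The reduction to the multiplicative case is not achieved by the ergodic/entropy-decrement mechanism you describe: the reduction actually in use (and the one quoted in the paper) is a short Fourier-analytic argument which, from a bounded-discrepancy $\pm 1$ sequence, produces a completely multiplicative sequence of unit-modulus complex numbers whose partial sums have bounded mean square. The entropy-decrement argument belongs to the \emph{other} half of the proof: it is the engine that, together with the Matom\"aki--Radziwi\l\l\ theorem on multiplicative functions in short intervals, establishes the logarithmically averaged two-point Elliott estimate, which is then applied to the reduced multiplicative problem. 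Your account has this arrow reversed, treating the Elliott correlation estimate as an input needed to drive the entropy decrement rather than as its output. You are also right that Hal\'asz-type considerations enter, but Hal\'asz alone does not close the multiplicative case --- the Matom\"aki--Radziwi\l\l\ input is essential and is absent from your outline. Finally, as you concede yourself, the logarithmic Elliott estimate is a deep theorem and not a known black box, so your sketch reduces the conjecture to a statement that itself demands a substantial proof. The map of the territory is essentially correct; the hard analysis lives entirely in the parts you flag as obstacles and leave unresolved.
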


The purpose of this paper is to say a little bit about the two conjectures and to discuss some known results and related problems.

\section{Arithmetic progressions in sparse sets}

What does it tell us about a set $A$ if $\sum_{n\in A}n^{-1}$ diverges? Clearly it tells us that in some sense $A$ is not too small, since the larger it is, the more likely the sum of its reciprocals is to diverge. A rough interpretation of the condition turns out to be that the density $\d(n)=n^{-1}|A\cap\{1,2,\dots,n\}|$ decreases not too much faster than $(\log n)^{-1}$. One way of seeing this is as follows. Writing $1_A$ for the characteristic function of $A$, we have the trivial identity
\[1_A(n)=n\d(n)-(n-1)\d(n-1),\]
from which (if we adopt the convention that $\d(0)=0$) it follows that 
\[\sum_{n\in A}n^{-1}=\sum_nn^{-1}1_A(n)=\sum_n(\d(n)-\d(n-1)+\d(n-1)/n)=\sum_n\d(n-1)/n.\]
Thus, if the density decreases like $(\log n)^{-1}$ then we get a sum like $\sum_n1/n\log n$, which diverges, while if it decreases like, say, $(\log n)^{-1}(\log\log n)^{-2}$, then we get a convergent sum. 

Of course, the density does not have to decrease smoothly in this way, but this nevertheless gives a good general picture of what the conjecture is saying. In particular, the simple calculation just given tells us that if $\sum_{n\in A}n^{-1}=\infty$, then there must be infinitely many $n$ for which $\d(n)\geq (\log n)^{-1}(\log\log n)^{-2}$, so to prove Erd\H os's conjecture it is sufficient to prove the following statement.

\begin{conjecture}
For every $k$ there exists $n$ such that if $A$ is any subset of $\{1,\dots,n\}$ of cardinality at least $n/\log n(\log\log n)^2$, then $A$ contains an arithmetic progression of length~$k$.
\end{conjecture}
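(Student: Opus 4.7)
My plan is to follow the density-increment paradigm that goes back to Roth. Given $A \subseteq \{1,\ldots,N\}$ of density $\delta$ containing no $k$-term arithmetic progression, one aims to exhibit a long arithmetic progression $P \subseteq \{1,\ldots,N\}$ on which the relative density of $A$ exceeds $\delta + c(\delta)$ for some explicit function $c$. Rescaling $P$ to look like $\{1,\ldots,N'\}$ and iterating yields an upper bound on $\delta$ in terms of $N$. To reach the threshold $\delta \sim (\log N)^{-1}(\log\log N)^{-2}$ the iteration must sustain roughly $\log N/(\log\log N)^{2}$ steps, which forces a per-step increment of order $\delta^{1+o(1)}$ and a progression $P$ of length polynomial in $N$; a straight power saving $c(\delta) = \delta^{C}$ with $C>1$ will not suffice.

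\textbf{Implementation.} For $k=3$ the starting point is Roth's observation that if $A$ contains no 3-AP then $\widehat{1_A}$ has a non-trivial large Fourier coefficient; to reach the required density one has to analyse the large spectrum of $1_A$ carefully, in the spirit of Bateman--Katz and Sanders, transferring the increment first to a Bohr set of large dimension and then to a genuine long progression via Bogolyubov-type lemmas. For $k \geq 4$ linear Fourier analysis no longer detects the progressions, and one would instead invoke the inverse theorem for the Gowers uniformity norm $U^{k-1}$: if $\|1_A - \delta\|_{U^{k-1}}$ is large then $1_A$ correlates with a $(k-2)$-step nilsequence, whose factorisation gives a density increment on a nilprogression, and Weyl equidistribution on the corresponding nilmanifold locates a long arithmetic progression inside it on which the increment survives. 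One then iterates as in the $k=3$ case.

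\textbf{Main obstacle.} The decisive difficulty is the quantitative dependence in the inverse $U^{k-1}$ theorem. At present the correlation parameter decays only as an iterated logarithm of $\delta$, so the density-increment iteration halts after far too few steps and produces only bounds of the form $\delta \gtrsim (\log\log N)^{-c_{k}}$, which is vastly weaker than what is needed. The conjecture would follow at once from a quasi-polynomial — or, ideally, polynomial — bound in the inverse theorem; obtaining such a bound for $k \geq 4$ remains well beyond current technology and seems to require a fundamentally better understanding of the structure of approximate polynomial phases modulo $N$. For the $k=3$ case the analogous bottleneck is the Bateman--Katz--Sanders analysis of the large spectrum, where a genuine $\delta^{1+\eta}$ increment is the key remaining target.
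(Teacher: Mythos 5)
The statement you are asked to prove is in fact an open \emph{conjecture} in the paper (it is introduced precisely so that the author can observe that it would imply Conjecture \ref{sumofreciprocals}); the paper contains no proof of it, only a survey of partial progress. So there is no ``paper's proof'' against which to check your argument, and your proposal, to its credit, does not pretend to be a proof: it is a strategy outline that honestly terminates at the same quantitative bottlenecks that the paper itself describes. In that sense you have correctly diagnosed the situation.

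Comparing your sketch with the paper's survey material, your account is broadly accurate but imprecise in a couple of places. You bundle together ``Bateman--Katz--Sanders'' as if it were a single working approach to Roth's theorem in $\{1,\dots,N\}$; in fact Bateman--Katz is a cap-set result in $\F_3^n$, and the paper explicitly records that the initial hope of combining their ideas with Sanders's to break the logarithmic barrier in $\Z/N\Z$ ran into serious obstructions (``good reasons emerged to suppose that this cannot be done without significant new ideas''). Sanders's own bound, $C(\log\log n)^5/\log n$, still sits on the wrong side of the target density $(\log n)^{-1}(\log\log n)^{-2}$, so even $k=3$ was open at the time of the paper. For $k\geq 4$ your appeal to the quantitative inverse $U^{k-1}$ theorem and nilsequence correlation is the right framing, and the paper agrees that the known bounds (of Gowers, and of Green--Tao for $k=4$) are of $(\log\log n)^{-c}$ type, far too weak. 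The genuine gap in your proposal, then, is not a logical error but the same gap the paper identifies: no one knows how to make the density increment close, and you have not supplied the missing ingredient. As a ``proof'' this fails by your own admission; as a summary of why the conjecture is hard, it is essentially consistent with the paper, modulo the inaccuracy about Bateman--Katz noted above.
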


It is also not hard to show that to \emph{disprove} Erd\H os's conjecture, it would be sufficient to show that for every $k$ and every sufficiently large $n$ there exists a subset $A\subset\{1,\dots,n\}$ of cardinality at least $n/\log n$ that does not contain an arithmetic progression of length $k$. To do this, for each sufficiently large $r$ let $A_r$ be a subset of $\{2^r+1,\dots,2^{r+1}\}$ of size at least $cr^{-1}2^r$ that contains no arithmetic progression of length $k$ and let $A$ be the infinite set $A_s\cup A_{s+2}\cup A_{s+4}\cup\dots$ for a sufficiently large $s$. Then for every sufficiently large $n$ we have $\d(n)\geq c'(\log n)^{-1}$ and $A$ contains no arithmetic progression of length $k$. 
\medskip

Thus, Erd\H os's conjecture is basically addressing the following problem, and suggesting an approximate answer.

\begin{problem}
Let $k$ and $n$ be positive integers. How large does a subset $A\subset\{1,2,\dots,n\}$ have to be to guarantee that it contains an arithmetic progression of length $k$?
\end{problem}

\noindent The suggested answer is that a cardinality of somewhere around $n/\log n$ should be enough.

A natural starting point would be to prove \emph{any} bound of the form $o(n)$. This gives us another famous conjecture of Erd\H os, made with Paul Tur\'an in 1936 \cite{erdosturan}.

\begin{conjecture} \label{erdosturan}
For every positive integer $k$ and every $\d>0$ there exists $n$ such that every subset $A\subset\{1,2,\dots,n\}$ of cardinality at least $\d n$ contains an arithmetic progression of length $k$.
\end{conjecture}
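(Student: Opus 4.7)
The plan is to proceed by a \emph{density increment} argument. Suppose $A \subset \{1,\dots,n\}$ has density $\d$ and contains no arithmetic progression of length $k$. I aim to find a long arithmetic progression $P \subset \{1,\dots,n\}$, of length tending to infinity with $n$, on which $A$ has density at least $\d + c$, where $c = c(\d,k)>0$. Rescaling $P$ to an interval $\{1,\dots,n'\}$ and applying the same argument to the image of $A\cap P$, the density strictly increases at each step; since it cannot exceed $1$, the iteration must halt after $O(1/c)$ steps, which forces $n$ to be bounded in terms of $\d$ and $k$. Any $n$ larger than this bound therefore admits the desired progression.

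For the base case $k=3$ (Roth's theorem), classical Fourier analysis on $\Z/N\Z$ suffices. The number of 3-APs in $A$ can be written as $\sum_r \hf(r)^2\overline{\hf(-2r)}$; the term $r=0$ contributes $\sim\d^3 N^2$, so absence of nontrivial 3-APs forces $|\hf(r)|\gg \d^2$ for some $r\ne 0$. Partitioning $\{1,\dots,n\}$ into subprogressions on which $x\mapsto rx/N$ is almost constant, this concentrated Fourier mass translates into a density increment on at least one piece.

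For general $k$ the linear Fourier transform is too crude: there exist $k$-AP-free sets that are linearly uniform. The natural substitute is the Gowers uniformity norm $U^{k-1}$. A counting (or ``generalized von Neumann'') lemma shows that if $\|1_A-\d\|_{U^{k-1}}$ is small then $A$ contains roughly $\d^k N^2$ progressions of length $k$, so failure forces $\|1_A-\d\|_{U^{k-1}}$ to be bounded below. One then appeals to an \emph{inverse theorem}: a bounded function with large $U^{k-1}$ norm must correlate with a structured object---a polynomial phase of degree $k-2$, or more generally a $(k-2)$-step nilsequence. Correlation with such a structured function can again be converted, by chopping $\{1,\dots,n\}$ into progressions on which it is approximately constant, into a density increment.

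The hard part will be the inverse theorem. Already for $k=4$ the relevant structured objects are quadratic phases such as $e^{2\pi i\a x^2/N}$, and showing that large $U^3$ norm implies correlation with one requires substantial additive combinatorics, including a Balog--Szemer\'edi--Gowers-type regularisation and Freiman-style structural results for sets of small doubling. For higher $k$ one needs the full nilpotent structure theory. Obtaining quantitative bounds strong enough that the density increment yields a reasonable bound on $n$---and in particular one approaching the $n/\log n$ threshold demanded by Conjecture~\ref{sumofreciprocals}---remains the principal technical challenge.
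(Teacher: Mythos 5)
The statement is Szemer\'edi's theorem, and this paper does not actually prove it --- it is a survey that recalls several known proofs and then discusses quantitative bounds. Your outline corresponds to one of the proofs the paper describes, namely the density-increment argument via Gowers uniformity norms from \cite{G1,G2}. Your $k=3$ paragraph is essentially Roth's argument, and your general-$k$ scaffolding (generalized von Neumann counting lemma, inverse theorem, density increment on a subprogression, rescale and iterate) is the right architecture for that proof. The genuine gap, which you yourself flag, is that the entire weight of the argument rests on the inverse theorem for $\|\cdot\|_{U^{k-1}}$, which you leave as a black box; everything else in your sketch is comparatively routine, and without the inverse theorem the iteration has no engine.

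One clarification is worth making, because you conflate two rather different statements. Gowers's proof does not establish, nor need, correlation with a global $(k-2)$-step nilsequence. What it proves is a \emph{local} inverse theorem: a bounded function with large $U^{k-1}$ norm correlates with a polynomial phase of degree $k-2$ on many cells of some partition of $\{1,\dots,n\}$ into fairly long arithmetic progressions. That local statement, obtained via Balog--Szemer\'edi--Gowers, Freiman's theorem, and a delicate inductive ``linearisation'' step, is then converted into a density increment by Weyl equidistribution; the shortness of the resulting subprogression is exactly why the final bound is of the double-logarithm form quoted in the paper. The global nilsequence inverse theorem of Green, Tao and Ziegler is a later and substantially harder result proved by different methods; it certainly implies what is needed, but invoking it here is a sledgehammer and would not make your task any easier. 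It is also worth remembering that the paper records several proofs that avoid the inverse theorem entirely --- Szemer\'edi's regularity-lemma argument, Furstenberg's ergodic approach, hypergraph regularity, hypergraph limits, and the combinatorial density Hales--Jewett proof --- so the route you have chosen, while legitimate, is by no means the only or the easiest way to close the gap.
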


Even this much weaker conjecture turned out to be very hard, and very interesting indeed: it can be seen as having given rise to several different branches of mathematics.

The first progress on the Erd\H os-Tur\'an conjecture was due to Roth, who proved in 1953 that it is true when $k=3$ \cite{roth}. Roth's proof, which used Fourier analysis, showed that $\d$ could be taken to be $C/\log\log n$ for an absolute constant $C$. The problem for longer progressions turned out to be much harder, and it was not until 1969 that there was further progress, when Szemer\'edi proved the result for $k=4$ \cite{szem1}, this time with a bound for $\d$ that was too weak to be worth stating explicitly. And a few years later (the paper was published in 1975), Szemer\'edi managed to prove the general case \cite{szem2}.

\subsection{Other proofs of Szemer\'edi's theorem}

This result was hailed at the time and is still regarded as one of the great mathematical results of the second half of the twentieth century, but it was by no means the end of the story: over the last four decades its significance has steadily grown. In this respect, the Erd\H os-Tur\'an conjecture is like many conjectures of Erd\H os. Initially it seems like an amusing puzzle, but the more you think about it, the more you come to understand that the ``amusing puzzle" is a brilliant distillation of a much more fundamental mathematical difficulty. There are few direct applications of Szemer\'edi's theorem (though they do exist), but an enormous number of applications of the methods that Szemer\'edi developed to prove the theorem, and in particular of his famous regularity lemma.

Since then, there have been several other proofs of the theorem, which have also introduced ideas with applications that go well beyond Szemer\'edi's theorem itself. In 1977, Furstenberg pioneered an ergodic-theoretic approach \cite{furst}, giving a new proof of the theorem and developing a method that went on to yield the first proofs of many generalizations, of which we mention three notable ones.

The first is a natural multidimensional version of Szemer\'edi's theorem, due to Furstenberg and Katznelson \cite{FK1}.

\begin{theorem}
For every $\d>0$, every positive integer $d$ and every subset $K\subset\Z^d$ there exists $n$ such that every subset $A\subset\{1,\dots,n\}^d$ of size at least $\d n^d$ contains a homothetic copy of $K$: that is, a set of the form $aK+b$ for some positive integer $a$ and some $b\in\Z^d$.
\end{theorem}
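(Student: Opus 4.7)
The natural approach, following Furstenberg and Katznelson, is to deduce the theorem from a multiple recurrence statement for commuting measure-preserving transformations, via a correspondence principle. Suppose for contradiction that for some $\d>0$ and some finite $K=\{k_1,\dots,k_m\}\subset\Z^d$ there exist arbitrarily large $n$ and sets $A_n\subset\{1,\dots,n\}^d$ of size at least $\d n^d$ with no homothetic copy of $K$. Viewing $1_{A_n}$ (extended by zero) as a point of $\{0,1\}^{\Z^d}$, I would form the averaged measures $\mu_n=n^{-d}\sum_{b\in\{1,\dots,n\}^d}\delta_{\sigma^b 1_{A_n}}$, where $\sigma^v$ denotes the shift by $v$, and pass to a weak-$*$ limit $\mu$ along a subsequence. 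By construction $\mu$ is invariant under the $\Z^d$-action by shifts, the cylinder $E=\{x:x(0)=1\}$ satisfies $\mu(E)\geq\d$, and the absence of homothetic copies of $K$ in $A_n$ forces $\mu(\sigma^{ak_1}E\cap\cdots\cap\sigma^{ak_m}E)=0$ for every $a\in\N$.

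Thus it would suffice to prove the following \emph{multidimensional multiple recurrence theorem}: for every measure-preserving $\Z^d$-action $(T^v)_{v\in\Z^d}$ on a probability space $(X,\mu)$, every $E$ with $\mu(E)>0$, and every finite $\{k_1,\dots,k_m\}\subset\Z^d$,
\[
\liminf_{N\to\infty}\frac1N\sum_{a=1}^N\mu(T^{ak_1}E\cap\cdots\cap T^{ak_m}E)>0.
\]
Any $a$ witnessing positivity of this intersection then yields a homothetic copy of $K$ inside $A_n$ for $n$ large enough, contradicting the assumption.

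To establish the recurrence statement I would build the system up in a tower of factors, in the style of Furstenberg's structure theorem but adapted to the commuting $\Z^d$-action: at each stage one adds either a compact extension or a relatively weakly mixing extension, until one reaches the whole system. The proof then runs as a bootstrap: (a) show that the desired positivity is inherited by any compact extension of a factor for which it holds, using almost periodicity to locate many $a$ at which all of $T^{ak_1},\dots,T^{ak_m}$ return simultaneously close to the identity on the fibres; and (b) show that it is inherited by any relatively weakly mixing extension, via a van der Corput manipulation showing that the multilinear average on the extension agrees in $L^2$ with its projection to the factor. Starting from the trivial factor, where the statement is immediate, one inducts up the tower.

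The hard part is step (b) in this multidimensional setting. Unlike the one-variable case, where van der Corput reduces an average indexed by $\{T^a,T^{2a},\dots,T^{ma}\}$ to one of shorter length, here $T^{ak_1},\dots,T^{ak_m}$ are genuinely distinct commuting transformations, and a single van der Corput step transforms the configuration $K$ into a new configuration obtained by differencing pairs of the $k_i$. To close the induction one therefore has to embed the theorem into a stronger statement simultaneously valid for an entire class of configurations, and run a joint induction on an appropriate complexity parameter of $K$. Identifying the right complexity, and verifying that the induction actually closes, is the key ergodic-theoretic innovation of Furstenberg and Katznelson.
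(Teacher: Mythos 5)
The paper does not prove this theorem: it is stated purely as a citation to Furstenberg and Katznelson \cite{FK1}, so there is no internal argument against which to compare your attempt. That said, your sketch does correctly identify the route taken in the cited reference. The correspondence step is set up properly: after passing to a weak-$*$ limit of the empirical measures $\mu_n$ one obtains a genuinely shift-invariant measure $\mu$ with $\mu(E)\geq\d$, and since the relevant sets are cylinder sets the absence of homothetic copies of $K$ in $A_n$ does pass to the limit to give $\mu(\sigma^{ak_1}E\cap\cdots\cap\sigma^{ak_m}E)=0$ for every $a\geq1$, contradicting positivity of the recurrence $\liminf$. You also correctly reduce to multiple recurrence for commuting transformations (taking $T_i=T^{k_i}$) and correctly diagnose the central new difficulty relative to the one-transformation case: a van der Corput step replaces the configuration by one built from differences $k_i-k_j$, so the induction must be carried out over a whole class of configurations at once rather than for a fixed $K$. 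What you have not done, and could not reasonably be expected to do in a sketch of this length, is actually prove any of the three ergodic ingredients you invoke -- that compact extensions preserve the multiple recurrence property (which in \cite{FK1} rests on a colouring argument of Hales--Jewett type), that relatively weakly mixing extensions preserve it, and that the tower/structure argument closes. So this is a correct and well-chosen outline of the Furstenberg--Katznelson proof, but the substance of that proof lives in the steps you describe only at the level of strategy.
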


Next, we have the ``density Hales-Jewett theorem", also due to Furstenberg and Katznelson \cite{FK2}. For this we need a definition. If $x$ is a point in $\{1,\dots,k\}^n$ and $E$ is a subset of $\{1,2\dots,n\}$, then for each $1\leq j\leq k$ let $x\oplus jE$ be the point $y\in\{1,\dots,k\}^n$ such that $y_i=j$ for every $i\in A$ and $y_i=x_i$ otherwise. A \emph{combinatorial line} in $\{1,\dots,k\}^n$ is a set of points of the form $\{x\oplus jE:j=1,\dots,k\}$.

\begin{theorem}
For every $\d>0$ and every $k$ there exists $n$ such that every subset $A\subset\{1,\dots,k\}^n$ of cardinality at least $\d k^n$ contains a combinatorial line.
\end{theorem}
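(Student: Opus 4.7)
The plan is to prove this by induction on $k$, using a density-increment strategy of the kind pioneered by Roth for the $k=3$ case of Szemer\'edi's theorem. The base case $k=2$ is essentially Sperner's theorem: a combinatorial line in $\{1,2\}^n$ is just a pair of distinct comparable points $x\le y$ in the product order, and any antichain has density $O(n^{-1/2})$, so for $n$ sufficiently large in terms of $\delta$, every set of density at least $\delta$ must contain a comparable pair and hence a combinatorial line.

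For the inductive step, I would assume density Hales--Jewett for $k-1$ and deduce it for $k$. Suppose $A\subset\{1,\dots,k\}^n$ has density $\delta$ but contains no combinatorial line. The heart of the argument would be a density-increment lemma: under these hypotheses there is a combinatorial subspace $S$ of $\{1,\dots,k\}^n$ (that is, a subset combinatorially isomorphic to $\{1,\dots,k\}^{n'}$ for some $n'$ much smaller than $n$ but still growing with $n$) on which $A$ has density at least $\delta+c(\delta,k)$, for some positive function $c$. Iterating, the density would have to exceed $1$ after at most $O(1/c)$ steps, a contradiction, provided $n$ was chosen large enough at the outset that the dimension has not been exhausted during these iterations.

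To establish the density increment I would analyse the interaction between $A$ and the family of \emph{$ij$-insensitive} sets: sets $B\subset\{1,\dots,k\}^n$ whose indicator is invariant under replacing any coordinate equal to $i$ by $j$, for a chosen pair $i<j\le k$. The key observation is that having no combinatorial line forces the indicator of $A$ to correlate nontrivially with some $ik$-insensitive set for each $i<k$: otherwise one could push a point of $A$ along the ``$k$-direction'' — changing coordinates from $i$ to $k$ — and remain in $A$, producing the forbidden line. Insensitive sets have a structure close to unions of fibres over objects living on $\{1,\dots,k-1\}^n$, so the inductive hypothesis can be invoked inside one such fibre to extract a large combinatorial subspace on which the density of $A$ is boosted.

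The main obstacle is running this density-increment iteration without a catastrophic loss of dimension. Each step passes through an application of density Hales--Jewett for $k-1$, together with a Ramsey-type regularisation used to extract the subspace, and one must track how these losses accumulate so that the process can be repeated $O(1/c(\delta,k))$ times while still leaving a subspace of positive dimension. A secondary but serious difficulty is that the step from ``no combinatorial line'' to ``correlation with an insensitive set'' is really a structure-versus-quasirandomness dichotomy, and one needs a counting lemma for combinatorial lines in subspaces — proved jointly with the main statement via the same induction — in order to convert quasirandomness into an actual line.
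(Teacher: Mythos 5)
The paper does not prove this theorem; it is stated as a result of Furstenberg and Katznelson (proved by ergodic methods), and the combinatorial proof by the Polymath project is only mentioned later in the text, so there is no proof in the paper to compare against. Your sketch follows the Polymath combinatorial route, and you have correctly identified its main architectural ingredients: induction on $k$ with Sperner's theorem as the base case, a density-increment iteration on combinatorial subspaces, and $ij$-insensitive sets as the structured objects with which a line-free set should correlate. You also correctly flag the two central difficulties, namely the structure-versus-quasirandomness dichotomy and the dimension bookkeeping.

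That said, the heuristic you offer for the correlation step does not survive scrutiny and understates what is needed. ``Push a point of $A$ from $i$ to $k$ and remain in $A$'' does not produce a combinatorial line: a line consists of $k$ points obtained by substituting each of $1,\dots,k$ into the \emph{same} wildcard set, so the failure of a single $i\to k$ substitution for a single wildcard set gives no correlation information unless such failures are organized across a positive-density family of wildcard sets with common structure. In the Polymath argument this is achieved by several interlocking pieces: a change to equal-slices measure so that the statistics of wildcard sets are well behaved under restriction; a multidimensional form of the theorem proved simultaneously within the same induction; a decomposition showing that a dense line-free set has increased density on an \emph{intersection} of $ij$-insensitive sets rather than on a single one; and a separate densification lemma that converts such a correlation into a density increment on an actual combinatorial subspace. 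Each of these uses the inductive hypothesis and interacts with the dimension losses you mention. In short, your plan points in the right direction and names the right objects, but the lemma at its heart is a placeholder rather than an argument, and proving it is where essentially all the work of the Polymath proof lies.
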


Finally, the Bergelson-Leibman theorem \cite{BL} is the following remarkable ``polynomial version" of Szemer\'edi's theorem.

\begin{theorem}
For every $\d>0$ and every sequence $P_1,\dots,P_k$ of polynomials with integer coefficients and no constant term there exists $n$ such that every subset $A\subset\{1,2,\dots,n\}$ of cardinality at least $\d n$ contains a subset of the form $\{a+P_1(d),a+P_2(d),\dots,a+P_k(d)\}$ with $d\ne 0$. 
\end{theorem}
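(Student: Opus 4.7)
The plan is to follow the ergodic-theoretic route pioneered by Furstenberg, since no purely combinatorial proof of the Bergelson-Leibman theorem was known for many years. The starting point is Furstenberg's correspondence principle: given a set $A\subset\N$ of positive upper density $\d$, one constructs a measure-preserving system $(X,\mathcal{B},\mu,T)$ and a set $B\subset X$ with $\mu(B)\geq\d$ such that the existence of infinitely many polynomial configurations $\{a+P_1(d),\dots,a+P_k(d)\}$ in $A$ for some fixed $d\ne 0$ is implied by
\[\mu\bigl(B\cap T^{-P_1(d)}B\cap\dots\cap T^{-P_k(d)}B\bigr)>0.\]
So the combinatorial statement reduces (after some soft arguments to pass from density $\d n$ in a finite interval to positive upper density in $\N$) to proving polynomial multiple recurrence: for any measure-preserving system and any set $B$ of positive measure, the Ces\`aro average of the above quantity over $d\in\{1,\dots,N\}$ has a strictly positive $\liminf$ as $N\to\infty$.

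To prove this ergodic statement, I would use Bergelson's \emph{PET induction} (Polynomial Exhaustion Technique). Attach to each tuple $(P_1,\dots,P_k)$ of integer polynomials a complexity measure based on the degrees of the $P_i$ and the degrees of their pairwise differences. The base case is where all polynomials are linear, and one can reduce to Szemer\'edi's theorem (or rather its ergodic formulation, Furstenberg multiple recurrence). The inductive step uses a van der Corput-type lemma in Hilbert space: for bounded vectors $v_d$ in $L^2(\mu)$, the size of $N^{-1}\sum_{d\leq N}v_d$ is controlled by an average over $h$ of the inner products $\langle v_d,v_{d+h}\rangle$. Applied to $v_d = T^{P_1(d)}\mathbf{1}_B\cdot T^{P_2(d)}\mathbf{1}_B\cdots T^{P_k(d)}\mathbf{1}_B$ and combined with a shift, this produces expressions involving the polynomials $\{P_i(d+h)-P_j(d) : 1\leq i,j\leq k\}$ (after fixing one index). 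With careful choices, the resulting polynomial family has strictly smaller PET complexity, so the induction goes through.

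There are two main obstacles. The first is combinatorial-algebraic: one must design the complexity measure so that \emph{every} application of the van der Corput differencing step genuinely decreases it, even when the polynomials have a mixture of degrees or share leading coefficients; this is the delicate heart of Bergelson's argument and is what makes PET nontrivial. The second is the analysis of the characteristic factor. For the linear Szemer\'edi case one can reduce to the Kronecker factor for $k=3$, and more generally to Host-Kra nilfactors for higher $k$; for polynomial patterns the relevant characteristic factors are again nilfactors, but establishing this requires a separate orthogonality argument showing that averages of the product vanish when one of the functions is orthogonal to the appropriate nilfactor. Once both pieces are in place, the positivity of the limit on the nilfactor follows from the fact that nilsystems have polynomial equidistribution (a theorem of Leibman), which, combined with continuity and the fact that $B$ has positive measure, yields the required lower bound and hence the theorem.
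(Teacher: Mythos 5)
The paper does not prove this result; it is the Bergelson--Leibman theorem, which the survey merely cites as one of the fruits of Furstenberg's ergodic method. So there is no in-paper argument to compare against, and I will assess your sketch on its own terms.

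Your outline is broadly faithful to how the theorem is actually proved: Furstenberg's correspondence principle reduces the finitary statement to a polynomial multiple-recurrence theorem, and Bergelson's PET induction, driven by the van der Corput lemma in $L^2$, is indeed the engine that reduces a polynomial family to families of strictly lower complexity. The two difficulties you flag are the genuine ones. But two things should be corrected. First, the characteristic-factor/nilfactor story is anachronistic: the original 1996 Bergelson--Leibman proof does not use Host--Kra nilfactors or Leibman's equidistribution theorem on nilmanifolds, both of which came roughly a decade later; instead it builds on the Furstenberg--Katznelson structure theorem, climbing a tower of compact and relatively weak-mixing extensions and interleaving this with PET to obtain positivity directly, with no explicit identification of a characteristic factor. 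Presenting nilfactors as \emph{the} mechanism misdescribes the argument you say you are following; they give a legitimate modern alternative (and more, namely $L^2$-norm convergence of the averages), but that is a separate and later development. Second, the base of PET induction is not ``Szemer\'edi's theorem'': the induction bottoms out at a degenerate, minimal-weight polynomial family, not at the all-linear case; the all-linear case is merely the special instance that recovers Furstenberg's multiple-recurrence theorem, not the base of the induction. Finally, you should say explicitly where the hypothesis $P_i(0)=0$ is used: the statement is false without it, and in the ergodic proof it is precisely what makes $d=0$ a trivial return around which one perturbs to obtain a positive-density set of good $d$.
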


\noindent If we take $P_i(d)$ to be $(i-1)d$, then we recover Szemer\'edi's theorem, but this result is considerably more general. For example, amongst many other things it implies that in Szemer\'edi's theorem we can ask for the common difference of the arithmetic progression we obtain to be a perfect cube.

Another approach to Szemer\'edi's theorem was discovered approximately twenty years later by the author \cite{G1,G2}. One of the reasons that Roth's proof for progressions of length 3 was not quickly followed by a proof of the general case was that while the number of arithmetic progressions of length 3 in a set can be expressed very nicely in terms of Fourier coefficients, there is no useful Fourier expression for the number of arithmetic progressions of length 4 (or more). The proofs in \cite{G1,G2} replaced the trigonometric functions that Roth used by polynomial phase functions (that is, functions of the form $\exp(2\pi i p(x))$ for some polynomial $p$) restricted to arithmetic progressions. This strongly suggested that there should be a kind of ``higher-order Fourier analysis", and, in a major recent achievement, such a theory was worked out by Green, Tao and Ziegler \cite{GTZ} (see also \cite{GT2,BTZ}. Their \emph{inverse theorem for the uniformity norms} had a very important application that we shall describe briefly later.

A fourth approach to the theorem had its roots in a fascinating argument of Ruzsa and Szemer\'edi \cite{RS}, who used Szemer\'edi's regularity lemma to prove the following result, which is now known as the \emph{triangle removal lemma}.

\begin{theorem}
For every $\e>0$ there exists $\d>0$ such that if $G$ is any graph with $n$ vertices and at most $\d n^3$ triangles, then there is a triangle-free graph that differs from $G$ by at most $\e n^2$ eges.
\end{theorem}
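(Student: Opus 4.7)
The natural approach is the original one of Ruzsa and Szemer\'edi: apply Szemer\'edi's regularity lemma and then ``clean'' the graph in such a way that any surviving triangle witnesses the existence of very many triangles in the original $G$. Given $\e>0$, I would fix two further parameters $d$ and $\eta$ with $\eta\ll d\ll\e$ (say $d=\e/4$ and $\eta<d/4$), and apply the regularity lemma to $G$ with regularity parameter $\eta$ and some lower bound on the number of parts large enough that $1/k\leq\e/4$. This yields an equitable partition of the vertex set into parts $V_1,\dots,V_k$ such that all but at most $\eta\binom k2$ of the pairs $(V_i,V_j)$ are $\eta$-regular.

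Now define $G'\subset G$ by deleting every edge that is either (a)~contained in some $V_i$, (b)~contained in a pair $(V_i,V_j)$ that is not $\eta$-regular, or (c)~contained in an $\eta$-regular pair of density less than $d$. The number of edges of type (a) is at most $k(n/k)^2=n^2/k$; of type (b) at most $\eta\binom k2(n/k)^2\leq\eta n^2/2$; and of type (c) at most $d\binom k2(n/k)^2\leq dn^2/2$. With the parameters above, the total is at most $\e n^2$.

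It remains to show that if $G$ has at most $\d n^3$ triangles (for $\d$ to be chosen), then $G'$ is triangle-free. Suppose $G'$ contains a triangle $xyz$ with $x\in V_i,y\in V_j,z\in V_\ell$. By construction these three indices are distinct and the three pairs $(V_i,V_j),(V_j,V_\ell),(V_i,V_\ell)$ are all $\eta$-regular with density at least $d$. A standard counting lemma (proved by applying the definition of $\eta$-regularity twice, first to eliminate vertices $x\in V_i$ with atypical degree in $V_j$ and $V_\ell$, then to count common neighbours) shows that the number of triangles of $G$ with one vertex in each of $V_i,V_j,V_\ell$ is at least $(d-2\eta)^3(n/k)^3$, which is $\Omega(n^3/k^3)$. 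So setting $\d$ to be a small constant multiple of $k^{-3}$ produces a contradiction, completing the proof.

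The only real obstacle is the counting lemma of the last step; everything else is bookkeeping. The dependence of $\d$ on $\e$ is catastrophic, since $k$ is provided by the regularity lemma and is a tower of height $\mathrm{poly}(1/\eta)$, so $\d^{-1}$ is a tower in $1/\e$. This weakness is intrinsic to the method and is in fact the starting point for much of the subsequent work on quantitative versions of the removal lemma.
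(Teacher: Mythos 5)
The paper is a survey and does not prove this theorem; it simply states it, attributes it to Ruzsa and Szemer\'edi, and remarks only that their argument used the regularity lemma. Your proposal is precisely that standard argument (regularity partition, deletion of internal/irregular/sparse edges with the count coming to less than $\e n^2$, then the triangle counting lemma to derive a contradiction from a surviving triangle), and it is correct, including your closing remark that the resulting $\d$ is tower-type in $1/\e$.
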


By applying the triangle removal lemma to a suitably chosen graph, one can deduce Roth's theorem (with a much worse bound). 

It is natural to wonder whether this idea can be generalized to give a proof of the general case of Szemer\'edi's theorem. This thought led R\"odl to formulate an approach to the theorem, in which the regularity lemma was generalized from graphs to hypergraphs. The generalization is not straightforward to state, and proving both it and an associated ``counting lemma" turned out to be hard. Frankl and R\"odl proved a hypergraph regularity lemma in 1992 \cite{FR1} and in 2002 managed to use it to prove Szemer\'edi's theorem for progressions of length 4 \cite{FR2}. The general case was proved by this method in independent work of Nagle, R\"odl and Schacht \cite{NRS} and the author \cite{G3}. (In the latter proof the formulation of the hypergraph regularity lemma was different, which made it harder to prove but made the counting lemma easier to prove.) Hypergraph regularity has gone on to have several other applications. 

An important development in our understanding of the regularity lemma came with work of Lov\'asz and others on \emph{graph limits}. Loosely speaking, with the help of the regularity lemma one can show that very large graphs look like measurable functions from $[0,1]^2$ to $[0,1]$. In a way this is not too surprising, because the regularity lemma allows one to approximate any graph with just a bounded amount of information about densities between subsets. What is more surprising, however, is that the graph-limits point of view leads to a simpler proof of the regularity lemma itself \cite{LS}: for the limiting arguments one can use a weaker regularity lemma, and once one has passed to a measurable function on $[0,1]^2$, one has a limit of step functions, which implies that if one partitions into a very fine grid, then the function will be approximately constant on most squares. 

Once one is given the statement of Szemer\'edi's regularity lemma and the basic idea of the standard proof, working out the details is not especially hard to begin with. However, the limits approach generalizes to hypergraphs \cite{ES}, where proving corresponding results is much harder, and gives rise to similar simplifications. The resulting hypergraph-limits approach to Szemer\'edi's theorem has a strong claim to be the simplest known proof of the theorem. More generally, graph and hypergraph limits have become a very active area of research with several other applications.

We briefly mention one other candidate for the simplest known proof of Szemer\'edi's theorem, which is a combinatorial proof of the density Hales-Jewett theorem, discovered by a ``massive online collaboration" \cite{polymath}. It is easy to see that the density Hales-Jewett theorem implies Szemer\'edi's theorem: one just needs to interpret the points in $\{1,\dots,k\}^n$ as base-$k$ representations of integers, and then every combinatorial line is an arithmetic progression of length $k$ (but not vice versa). Recently, this proof has been simplified yet further~\cite{DKT}.

\subsection{Quantitative considerations}

As we saw earlier, Conjecture \ref{sumofreciprocals} is roughly saying that a density of $(\log n)^{-1}$ is enough to guarantee an arithmetic progression. But what is special about this bound? Indeed, \emph{is} it special?

There are two sensible answers to this question: yes and no. The reason the bound is special, and the reason that Erd\H os asked the question, is that the primes have density around $(\log n)^{-1}$ in the first $n$ integers. One of Erd\H os's formative mathematical experiences was proving for himself that the sum of the reciprocals of the primes diverges, and it is clear that his main motivation for the sum-of-reciprocals conjecture was that it would imply that the prime numbers contain arbitrarily long arithmetic progressions. This would be an example of a result of a kind that Erd\H os particularly liked: a result that appears to be number-theoretic but turns out to be true for purely combinatorial reasons.

It would have been fascinating to know how Erd\H os would have reacted to the proof by Green and Tao \cite{GT1} that the primes do indeed contain arbitrarily long arithmetic progressions. In fact, Green and Tao proved the following stronger result.

\begin{theorem} 
For every $\d>0$ and every $k$ there exists $n$ such that if $A$ is any set of at least $\d n/\log n$ primes between 1 and $n$, then $A$ contains an arithmetic progression of length~$k$.
\end{theorem}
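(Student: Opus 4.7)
The plan is to reduce to Szemer\'edi's theorem (Conjecture \ref{erdosturan}) by a relative/transference argument. The primes have density only of order $(\log n)^{-1}$, which puts them out of reach of Szemer\'edi's theorem directly; however, they should sit as a \emph{positive relative density} subset of a suitable ``pseudorandom'' majorant, and dense subsets of a sufficiently pseudorandom set ought to enjoy the same conclusions as dense subsets of the integers.

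First I would construct a nonnegative weight $\nu:\{1,\dots,n\}\to\R$ with: (i) $\nu(p)\ge c\log n$ for every prime $p\in[n/2,n]$; (ii) $\E_x\nu(x)=O(1)$; and (iii) a \emph{linear forms condition} and a \emph{correlation condition} asserting that the joint moments of $\nu$ over the systems of linear forms arising in the $k$-AP count agree, to leading order, with those of the constant weight $1$. The natural candidate is a truncated divisor sum of Selberg type,
\[\nu(n)\;=\;\log R\cdot\Bigl(\sum_{d\mid n,\,d\le R}\mu(d)\chi\bigl(\tfrac{\log d}{\log R}\bigr)\Bigr)^{\!2},\]
with $R$ a small power of $n$ and $\chi$ a smooth cutoff, whose multiple correlations can be evaluated by the Goldston-Yildirim method. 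A preliminary \emph{$W$-trick} (restricting to primes in a fixed residue class modulo $W=\prod_{p\le w}p$ with $w$ slowly growing) kills the mod-small-prime biases of the primes and lets one take $\chi$ and $\nu$ adapted to that single class.

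Given $\nu$, set $f=c(\log n)1_A$ on the $W$-adjusted model, so that $0\le f\le\nu$ and $\E_x f(x)\gtrsim\d$. The heart of the proof is a \emph{transference principle}: decompose $f=f_1+f_2$ with $0\le f_1\le C$ bounded and of mean still $\gtrsim\d$, and $\|f_2\|_{U^{k-1}}=o(1)$, where $U^{k-1}$ is the Gowers uniformity norm controlling $(k-1)$-step correlations. Expanding
\[\Lambda_k(f,\dots,f)\;=\;\E_{x,y}\prod_{j=0}^{k-1}f(x+jy)\]
multilinearly and invoking a \emph{generalized von Neumann inequality} (valid under the linear forms condition on $\nu$), every term containing a copy of $f_2$ is $o(1)$; what remains is $\Lambda_k(f_1,\dots,f_1)$, which is bounded below by a positive constant via Szemer\'edi's theorem applied to the bounded function $f_1$. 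Subtracting the negligible $y=0$ diagonal yields a genuine $k$-AP in $A$.

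The main obstacle is the construction of $\nu$ together with the verification of the linear forms and correlation conditions: these rest on genuinely number-theoretic input, namely Goldston-Yildirim-type estimates for multiple correlations of short divisor sums. A secondary but substantial difficulty is the transference step itself, which in the original argument is carried out via a Hahn-Banach / dense-model theorem, producing a bounded ``model'' $f_1$ for $f$ that is indistinguishable from $f$ by tests of bounded $U^{k-1}$-dual norm. Granting these two ingredients, the rest is a structured reduction to Szemer\'edi's theorem, of a kind that had become standard in additive combinatorics by the time of \cite{GT1}.
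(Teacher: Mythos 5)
Your sketch is a faithful outline of the Green--Tao transference argument, which is exactly the approach this survey attributes to the theorem: the paper does not reprove it but summarizes the proof as a ``relative Szemer\'edi theorem'' applied to the primes viewed as a dense subset of a pseudorandom measure, referring to \cite{GT1} for details. Your proposal fills in the standard ingredients of that argument (the $W$-trick, the Goldston--Yildirim divisor-sum majorant, the linear forms and correlation conditions, the dense-model/transference step, the generalized von Neumann inequality, and the final appeal to Szemer\'edi's theorem) and is entirely consistent with the paper's account.
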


That is, not only do the primes contain arbitrarily long arithmetic progressions, but so does any subset of the primes of positive relative density. (Of course, this too is implied by the sum-of-reciprocals conjecture.)

The proof of this celebrated result did not go according to Erd\H os's plan, in that it made significant use of distribution properties of the primes. However, despite this, it would almost certainly have appealed to Erd\H os's love of combinatorial arguments, since the main new ingredient in the proof was in a sense ``purely combinatorial": they proved a ``relative version" of Szemer\'edi's theorem, showing that a set $A$ that is a relatively dense subset of a set $B$ must contain an arithmetic progression of length $k$, provided that $B$ is sufficiently large and sufficiently ``pseudorandom" in a technical sense that they defined. (The result they stated and used was actually more general than this: $B$ was replaced by a ``pseudorandom measure".) In order to prove this result, they used Szemer\'edi's theorem as well as techniques from several of the proofs of the theorem. Thus, the work on the Erd\H os-Tur\'an conjecture did in the end result in a solution to the problem that so fascinated Erd\H os.

Green and Tao followed this theorem with a project to obtain asymptotic bounds for the number of arithmetic progressions of length $k$ (and many other configurations) in the primes up to $n$. Over several years, they published a sequence of major papers, culminating in a proof, with Tamar Ziegler, of the inverse theorem for the uniformity norms \cite{GTZ}, mentioned earlier, at which point the project was completed.

\subsubsection{How natural is Erd\H os's conjecture?}

The fact that Erd\H os's conjecture implies an extremely striking result about the primes is not really evidence that the correct bound in Szemer\'edi's theorem is anywhere near $\d=(\log n)^{-1}$. Obtaining such a bound would be wonderful, but there is no strong reason to suppose that it would be the last word on the subject.

In particular, the best known \emph{lower} bound for Szemer\'edi's theorem is far smaller than $(\log n)^{-1}$. It comes from a construction of Behrend in 1946 \cite{behrend}. Behrend started from the observation that the surface of a sphere contains no three points in a line, and in particular no three points such that one is the midpoint of the other two. The argument proceeds as follows. For suitable integers $m$ and $d$, to be optimized at the end of the argument, one shows by the pigeonhole principle that there exists $r$ such that the sphere of radius $r$ contains many points in the grid $\{1,\dots,m\}^d$. Next, one embeds that grid ``isomorphically" into the set $\{1,2,\dots,(2m)^d\}$ by thinking of the points in $\{1,\dots,m\}^d$ as base-$2m$ representations of integers. The main property of this ``isomorphism" is that it does not create any arithmetic progressions of length 3 that were not present before. Finally, one maximizes the number of points in the spherical surface subject to the constraint that $(2m)^d=n$. The resulting bound is $\d=\exp(-c\sqrt{\log n})$.

This bound helps to explain why it is so hard to determine optimal bounds for Szemer\'edi's theorem, even when the progressions have length 3. On a first acquaintance with the problem, it is natural to conjecture that the extremal example would be given by a simple probabilistic construction. If that were the case, then there would be hope of proving that that construction was best possible by showing that ``quasirandom sets are best". An approach like this works, for example, if one wishes to minimize, for a given cardinality of a subset $A\subset\Z/n\Z$, the number of quadruples $(a_1,a_2,a_3,a_4)\in A^4$ such that $a_1+a_2=a_3+a_4$, at least when that cardinality is significantly greater than $\sqrt n$. However, random sets do not work for progressions of length 3: the standard method of choosing points randomly with probability $p$, where $p$ is chosen such that the expected number of progressions of length 3 is at most half the expected number of points, and then deleting a point from each progression, gives a lower bound of $\d=cn^{-2/3}$, far smaller than the Behrend bound. 

The Behrend bound can be slightly improved when the progressions are longer, but for now let us focus on progressions of length 3. What is the correct bound for the first non-trivial case of Szemer\'edi's theorem? This is a fascinating question that is still wide open, despite the attention of many mathematicians. However, there has been some very interesting progress.

As mentioned earlier, the original argument of Roth gave an upper bound of $C(\log\log n)^{-1}$. This bound was improved to one of the form $(\log n)^{-c}$ by Heath-Brown \cite{HB} and Szemer\'edi \cite{szem3}. An important new technique, the use of regular Bohr sets, was introduced by Bourgain in 1999 \cite{bour1}, to improve the constant $c$. More precisely, he obtained a bound of $C(\log\log n/\log n)^{1/2}$. A difficulty with the problem is that cyclic groups are not rich in subgroups, so dropping down to a subgroup is not an option. Regular Bohr sets are a kind of substitute for subgroups, allowing Bourgain to get round this difficulty. They have subsequently been used in  many other proofs.

For a while, Bourgain's result was seen as the limit of what could be achieved without a radical change of approach. It therefore came as a surprise in 2008 when Bourgain introduced an idea that allowed him to carry out the general scheme of his proof more efficiently and obtain a power of $2/3$ instead of $1/2$. Sanders \cite{sanders1} pushed this approach further and obtained a power of $3/4$. 

Sanders followed up this improvement with a major advance on the problem \cite{sanders2}. He found an argument that was substantially different from Bourgain's and used it to obtain a bound of $C(\log\log n)^5/\log n$. Thus, he was tantalizingly close to the logarithmic barrier. In fact, even a bound of $c\log\log n/\log n$ would be enough to prove purely combinatorially that the primes contain infinitely many arithmetic progressions of length 3, since if $m$ is a number with many small prime factors, then most arithmetic progressions with common difference $m$ contain almost no primes, which means that some have a high density of primes. Working out the details, one can find arithmetic progressions of length $n$ in which the primes have density $c\log\log n/\log n$. 

\subsubsection{What is the right bound for Roth's theorem?}

That is where things stand today. Is the Behrend bound correct, or is Sanders's upper bound close to optimal? Nobody knows, but there there are two recent results that give weakish evidence that the Behrend bound is more like the truth of the matter.

The first of these concerns a closely related problem about subsets of $\F_3^n$ (where $\F_3$ is the field with three elements). How large must a subset of $\F_3^n$ be to guarantee that it contains an affine line, or equivalently three points $x,y,z$ such that $x+y+z=0$? (Such a triple can also be thought of as an arithmetic progression, since if $x+y+z=0$, then $2y=x+z$.)

It was observed by Meshulam that Roth's original argument works very cleanly in this context (the main reason being that, in contrast with the cyclic group $\Z/n\Z$, the group $\F_3^n$ is very rich in subgroups), and yields the following theorem \cite{meshulam}.

\begin{theorem}
There exists a constant $C$ such that every subset $A\subset\F_3^n$ of density at least $C/n$ contains an affine line.
\end{theorem}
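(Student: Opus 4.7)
The plan is to follow Roth's Fourier-analytic density increment scheme, which works particularly cleanly in $\F_3^n$ because every non-trivial character partitions the group into three translates of an index-$3$ subgroup, so a large Fourier coefficient automatically yields a density increment on a sub-copy of $\F_3^{n-1}$. Let $A\subset\F_3^n$ have density $\alpha=|A|/3^n$ and suppose $A$ contains no affine line. The key observation is that in $\F_3$ any solution of $x+y+z=0$ with two coordinates equal must have all three equal (since $x=y$ forces $z=-2x=x$), so the only contributions to $\sum_{x+y+z=0}1_A(x)1_A(y)1_A(z)$ come from the diagonal and from genuine affine lines; line-freeness therefore collapses this count to exactly $|A|$.

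Next I would expand the count via the Fourier transform on $\F_3^n$ with $\omega=e^{2\pi i/3}$, writing
\[
N \;=\; 3^{-n}\sum_{r\in\F_3^n}\Bigl(\sum_{x}1_A(x)\omega^{r\cdot x}\Bigr)^{\!3}
\;=\;\alpha^3 3^{2n}+3^{-n}\sum_{r\ne 0}\tilde 1_A(r)^3.
\]
Setting this equal to $|A|=\alpha 3^n$ and assuming, as we may, that the main term $\alpha^3 3^{2n}$ dominates $\alpha 3^n$ (which it does as soon as $\alpha\gg 3^{-n/2}$, a very mild condition given the target $\alpha\geq C/n$), we get
\[
\Bigl|\sum_{r\ne 0}\tilde 1_A(r)^3\Bigr|\;\geq\;\tfrac12\alpha^3 3^{3n}.
\]
Bounding the cube sum by $(\max_{r\ne 0}|\tilde 1_A(r)|)\cdot\sum_r|\tilde 1_A(r)|^2$ and using Parseval ($\sum_r|\tilde 1_A(r)|^2=|A|3^n=\alpha 3^{2n}$) yields a non-trivial character $r\ne 0$ with $|\tilde 1_A(r)|\geq\tfrac12\alpha^2 3^n$. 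Translating this back from Fourier space, the three cosets of the hyperplane $\{x:r\cdot x=0\}$ have $A$-densities averaging to $\alpha$, but the weighted combination has size $\gtrsim\alpha^2 3^n$; a routine averaging on three complex numbers summing to zero produces one coset $H$ (a copy of $\F_3^{n-1}$) on which the density of $A$ is at least $\alpha+c\alpha^2$ for an absolute constant $c>0$.

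I would then iterate the argument inside this affine hyperplane (after translating so that it is a genuine subgroup), producing a sequence of dimensions $n=n_0>n_1>n_2>\dots$ and densities $\alpha=\alpha_0<\alpha_1<\alpha_2<\dots$ with $n_{i+1}=n_i-1$ and $\alpha_{i+1}\geq\alpha_i(1+c\alpha_i)$. The standard book-keeping shows that such a sequence reaches density $1$ (an impossibility, since at that point the ambient space is a single point containing no line for dimension $\geq 1$) after $O(1/\alpha)$ steps, so we must lose at most $O(1/\alpha)$ dimensions before hitting a contradiction. Hence $A$ must contain a line as soon as $n\gtrsim 1/\alpha$, i.e.\ $\alpha\geq C/n$ for an appropriate absolute constant $C$. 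The main thing to take care of is the bookkeeping in the iteration: one should stop as soon as either $\alpha_i\geq 1/2$ or the dimension drops to zero, and verify that the condition $\alpha\geq C/n$ with $C$ chosen large enough really does prevent the process from running out of dimensions before a contradiction is reached. Beyond this accounting, the only subtlety is checking that the mild lower bound needed to absorb the diagonal term $\alpha 3^n$ into the main Fourier term is preserved throughout the iteration, which it trivially is since $\alpha_i$ is non-decreasing.
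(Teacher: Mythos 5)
The paper does not itself prove this theorem; it only cites Meshulam and describes the proof as ``Roth's original argument, which works very cleanly in $\F_3^n$ because the group is rich in subgroups.'' Your proposal is exactly that argument — the Fourier count of solutions to $x+y+z=0$, the observation that line-freeness collapses the count to the diagonal, the large-Fourier-coefficient-implies-density-increment step exploiting that a non-trivial character induces a subgroup of index $3$, and the iteration bookkeeping — and it is correct.
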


\noindent Thus, in this context, we have a logarithmic bound (since $n$ is logarithmic in the size, $3^n$, of the set $\F_3^n$). 

The gap between this and the best known lower bound is even more embarrassingly large than it is for Roth's theorem, since the lower bound is of the form $\alpha^n$ for some constant $\alpha<3$. (To obtain such a lower bound, one finds a low-dimensional example and takes powers of that example.) 

It was felt by many people that this was a better problem to attack than attempting to improve the bounds in Roth's theorem, since working in the group $\F_3^n$ presented technical simplifications without avoiding the deeper mathematical difficulties. And yet, despite the simplicity of the arguments for both the upper and lower bounds, for many years nobody could come up with any improvement. There was therefore considerable excitement in 2011 when Bateman and Katz \cite{BK} broke the logarithmic barrier for this problem, improving the upper bound to $C/n^{1+\e}$ for a small but fixed positive $\e$. Initially there was a hope that it might be possible to combine their ideas with those of Sanders to break the logarithmic barrier in Roth's theorem as well, thereby proving the first non-trivial case of Erd\H os's sum-of-reciprocals conjecture, but unfortunately good reasons emerged to suppose that this cannot be done without significant new ideas. However, the fact remains that the logarithmic barrier is not the right bound for the $\F_3^n$ version of the problem, which makes it hard to think of a good reason for its being the right bound for Roth's theorem itself.

The second recent result, also from 2011, makes it look as though a Behrend-type bound might be correct. Roth's theorem can be thought of as a search for solutions to the equation $x+z=2y$. Schoen and Shkredov, building on the methods that Sanders introduced to prove his near-logarithmic bound for Roth's theorem, showed that if we generalize this equation, then we can obtain a much better bound \cite{SS}.

\begin{theorem}
Let $A$ be a subset of $\{1,2,\dots,n\}$ of density $\exp(-c(\log n)^{1/6-\e})$. Then $A$ contains distinct elements $x_1,x_2,x_3,x_4,x_5$ and $y$ such that $x_1+x_2+x_3+x_4+x_5=5y$.
\end{theorem}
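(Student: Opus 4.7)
The plan is to adapt the Fourier-analytic density-increment strategy of Roth, Bourgain and Sanders, exploiting the fact that the equation $x_1+x_2+x_3+x_4+x_5=5y$ has six variables rather than three, which makes the corresponding Fourier error term much easier to absorb. After embedding $\{1,\dots,n\}$ into $\Z/N\Z$ for a prime $N$ slightly larger than $5n$ (so that wrap-around produces no spurious solutions), the number of solutions with coordinates in $A$ becomes
\[ T = \frac{1}{N}\sum_r \hat{1_A}(r)^5\,\overline{\hat{1_A}(5r)}. \]
Setting $\a = |A|/N$, the $r=0$ term contributes the main term $|A|^6/N = \a^6 N^5$, while degenerate configurations in which any two of the six elements coincide contribute only $O(|A|^4)$ and are negligible whenever $\a \gg N^{-1/2}$, a condition comfortably satisfied by the density hypothesis. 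So the theorem reduces to showing that the non-zero frequencies contribute at most half the main term, either outright or after passing to a subset of higher relative density.

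To bound the error $\mathcal{E}$ coming from $r\neq 0$, I would pull three factors out in $L^\infty$ and apply Cauchy--Schwarz together with Parseval to the remainder, obtaining $|\mathcal{E}| \leq M^3\sqrt{E(A)|A|}$, where $M = \max_{r\neq 0}|\hat{1_A}(r)|$ and $E(A)$ is the additive energy of $A$. Using the trivial bound $E(A)\leq |A|^3$, this is dominated by the main term unless $M \gg \a^{4/3}N$. The crucial arithmetic point is that the exponent $4/3$ here is strictly larger than the exponent $1$ that the same manoeuvre gives for three-term progressions; this is the pay-off of working with a six-variable equation. Such a large Fourier coefficient yields, by the standard Bogolyubov--Bourgain argument, a regular Bohr set of bounded rank on which the density of $A$ has climbed to at least $\a + c\a^{4/3}$, and one iterates on that Bohr set.

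The iteration is then driven along the lines of Sanders's near-logarithmic proof of Roth's theorem, with regular Bohr sets playing the role of subgroups. The large spectrum $\{r:|\hat{1_A}(r)|\geq \eta\a N\}$ is controlled either by Chang's theorem or, more efficiently, by a Croot--Sisask almost-periodicity lemma, which confines it inside a Bohr set of rank only polylogarithmic in $\a^{-1}$. Balancing the stronger Fourier gain (exponent $4/3$ rather than $1$) against the per-step loss in Bohr-set radius and rank, and comparing the total cost of the iteration with the size $N$ of the ambient group, is what eventually produces the exponent $1/6-\e$ after optimisation; the number $1/6$ itself emerges as the solution of the resulting balance equation between density doublings (of order $\mathrm{polylog}(\a^{-1})$ of them, thanks to the $\a^{4/3}$ gain) and the cumulative shrinkage of the Bohr set at each step.

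The main obstacle is quantitative and lies entirely in making the iteration run at the advertised rate. Sanders's large-spectrum technology is tailored to the three-variable equation defining a three-term progression, where the natural dual object is the additive energy itself; for the present six-variable equation one instead needs sharp control of higher moments like $\sum_r |\hat{1_A}(r)|^4\,|\hat{1_A}(5r)|^2$ and of the additive structure of the set of large Fourier coefficients, in the spirit of the Bateman--Katz analysis on $\F_3^n$. Ensuring that the stronger $\a^{4/3}$ initial gain is preserved at every round of passage to a sub-Bohr set, rather than being eroded by constants or logarithmic factors that would drag the final exponent below $1/6$, is where essentially all the difficulty lies.
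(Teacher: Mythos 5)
The paper does not actually prove this theorem: it is stated as a result of Schoen and Shkredov and cited as \cite{SS}, so there is no internal proof to compare against. The comments below are therefore a comparison with the Schoen--Shkredov strategy itself.

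Your general framework --- transfer to $\Z/N\Z$, the Fourier identity for the solution count, regular Bohr sets, and a density-increment iteration run on Sanders-type machinery (Chang's theorem, Croot--Sisask) --- is the correct family of ideas, and your main-term and degenerate-solution estimates are essentially right. But the proposal misidentifies where the real gain comes from, and the naive step it leans on would not close the iteration. A small numerical slip first: the same $L^\infty$-plus-Cauchy--Schwarz manoeuvre for three-term progressions gives the threshold $M \gg \a^{2}N$, not $\a N$, so the relevant comparison is $4/3$ versus $2$, not $4/3$ versus $1$. More seriously, the resulting \emph{additive} increment $\a \mapsto \a + c\a^{4/3}$ is far too weak to reach $\exp(-(\log n)^{1/6-\e})$: any iteration with an additive polynomial gain in $\a$ needs $\a^{-\Theta(1)}$ steps, and even with careful Bohr-set bookkeeping this terminates at a bound of the form $(\log n)^{-c}$, a power of a logarithm. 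A quasi-polynomial bound requires a \emph{multiplicative} increment $\a \mapsto (1+c)\a$, so that only $O(\log(1/\a))$ iterations are ever needed. In Schoen and Shkredov's argument that multiplicative gain comes from applying the Croot--Sisask almost-periodicity lemma directly to the high-fold convolution $1_A * \cdots * 1_A$: the extra variables are exploited not by inflating the exponent in a one-shot Fourier inequality, but by making the convolution smooth enough to be almost periodic along a low-rank Bohr set at only a multiplicative cost in the density, after which the assumed absence of solutions (which forces the convolution to be small at the points $5y$, $y\in A$) contradicts that smoothness. Your write-up presents the $\a^{4/3}$ threshold as the ``crucial arithmetic point'' and treats the Sanders/Croot--Sisask technology as a refinement layered on top; in fact the almost-periodicity argument replaces that computation and is the engine of the proof. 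Without making that structural change the exponent $1/6$ is unreachable.
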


\noindent Note that the Behrend lower bound is easily adapted to this equation (since if $x_1,\dots,x_5,y$ are distinct and satisfy that equation then they cannot all lie on the surface of a sphere), so this result is within spitting distance of best possible.

Of course, one could state an Erd\H os-like corollary to this theorem: if $A$ is a set of integers such that $\sum_{n\in A}n^{-1}$ diverges, then $A$ contains a non-degenerate solution to the equation $x_1+x_2+x_3+x_4+x_5=5y$. However, the original result is more natural.

The result of Schoen and Shkredov is by no means conclusive evidence that the correct bound for Roth's theorem is of the form $\exp(-(\log n)^c)$, since convolutions of three or more functions are significantly smoother than convolutions of two functions, a phenomenon that also explains why the twin-prime conjecture and Goldbach's conjecture are much harder than Vinogradov's three-primes theorem. However, one can at least say, in the light of this result and the result of Bateman and Katz, that there is a significant chance that the logarithmic barrier for Roth's theorem will eventually be surpassed and the first non-trivial case of Erd\H os's conjecture proved. 

\subsubsection{Arithmetic progressions of length 4 or more}

What happens for longer progressions? As mentioned earlier, the bounds coming from Szemer\'edi's proof are very weak. Furstenberg's proof was infinitary and gave no bound at all (though a discrete version of his argument was later found by Tao \cite{T}, which in principle gave a weak quantitative bound). The first argument to give a ``reasonable" bound was the one in \cite{G1,G2}, where the following theorem was proved.

\begin{theorem}
Let $A$ be a subset of $\{1,2,\dots,n\}$ of density at least $C(\log\log n)^{-1/2^{2^{k+9}}}$. Then $A$ contains an arithmetic progression of length $k$.
\end{theorem}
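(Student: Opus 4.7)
The plan is to follow a density-increment strategy organized around the Gowers uniformity norms. Introduce the norm $\|f\|_{U^{k-1}}$, defined as the $2^{k-1}$-th root of the average of $\prod_{\omega\in\{0,1\}^{k-1}} \overline{f}^{|\omega|}f^{k-1-|\omega|}$ evaluated at $x+\omega\cdot(h_1,\dots,h_{k-1})$, i.e.\ an average over combinatorial cubes. A repeated Cauchy--Schwarz argument (``generalized von Neumann'') shows that the normalized count of $k$-term APs weighted by a function $f$ bounded by $1$ is controlled by $\|f\|_{U^{k-1}}$. Consequently, if $A\subset\{1,\dots,N\}$ has density $\d$ and contains no nontrivial $k$-AP, then embedding into $\Z/N'\Z$ with $N'\sim 2kN$ and setting $f=1_A-\d$ forces $\|f\|_{U^{k-1}}\geq c\d^{C}$ for some constants depending on $k$.

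The core of the proof is an inverse theorem: if $\|f\|_{U^{k-1}}\geq\eta$ then $f$ correlates, on some arithmetic progression $P\subset\{1,\dots,N\}$ of length $N^{c_k}$, with a polynomial phase function $\exp(2\pi i\, p(x))$ of degree at most $k-2$. I would prove this by induction on $k$. Differentiating, one has $\E_h\|\Delta_h f\|_{U^{k-2}}^{2^{k-2}}\geq\eta^{2^{k-1}}$, so for a large set $H$ of directions $h$ the induction hypothesis provides polynomial phases $\phi_h$ of degree $k-3$ with which $\Delta_h f$ correlates on local progressions. The key move is to show that the ``top coefficients'' of the $\phi_h$ depend approximately \emph{linearly} on $h$: by iterated Cauchy--Schwarz one extracts many additive quadruples among these frequencies, then applies Balog--Szemer\'edi--Gowers followed by Freiman's theorem to place them inside a low-dimensional generalized AP. From this structured family of derivatives one reconstructs a genuine polynomial $p$ of degree $k-2$ such that $f$ correlates with $\exp(2\pi i\, p(x))$, and a Weyl-type equidistribution argument locates a long sub-AP on which $p$ is nearly constant in each coordinate, giving the desired correlation.

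Once the inverse theorem is in hand, the density increment is routine: partition the AP $P$ into sub-progressions of length $\sim(\text{length of }P)^{\gamma_k}$ on which $\exp(2\pi i\,p(x))$ is approximately constant (Weyl again), pigeonhole to find one on which $1_A$ beats $\d$ by at least $c\eta^{O_k(1)}\geq c\d^{C_k}$, and iterate. Each iteration loses a power in the length but gains a polynomial factor of $\d$ in the density, so after $O(\d^{-C_k})$ iterations $\d$ would exceed $1$. Balancing gives the stated bound with the doubly exponential dependence in $k$: one factor of tower growth comes from the induction on $k$ in the inverse theorem, and a second from the iteration on $\d$.

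The main obstacle, overwhelmingly, is the inverse theorem, specifically the step of integrating the family $\{\phi_h\}$ into a single polynomial phase. Proving approximate linearity of the top frequencies requires carefully designed Cauchy--Schwarz manipulations that do not dissipate the $U^{k-1}$-information, and the passage through Balog--Szemer\'edi--Gowers and Freiman's theorem is where the catastrophic tower-type constants enter: the quantitative bounds in Freiman's theorem are responsible for the $2^{2^{k+9}}$ in the final density, and any improvement here (for instance via the polynomial Freiman--Ruzsa conjecture) would feed directly into improved bounds for Szemer\'edi's theorem itself.
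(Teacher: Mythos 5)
The paper does not itself prove this theorem: it is a survey, and the statement is quoted with a citation to \cite{G1,G2}. Your sketch is, however, a broadly faithful reconstruction of the argument in those papers: the $U^{k-1}$ uniformity norms, the generalized von Neumann inequality, the passage to derivatives $\Delta_h f$, the additive structure of the extracted ``top frequencies'' via iterated Cauchy--Schwarz, Balog--Szemer\'edi--Gowers and a Freiman-type theorem, Weyl equidistribution, and the density-increment iteration are genuinely the ingredients.

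Two cautions are in order. First, the inverse-type statement actually proved in \cite{G1,G2} is \emph{local}: one obtains a decomposition of $\{1,\dots,N\}$ into long arithmetic progressions, on many of which $f$ correlates with a polynomial phase that is allowed to vary from progression to progression, and the density increment then comes by pigeonhole. One does \emph{not} produce a single global polynomial phase on $\{1,\dots,N\}$ -- a global inverse theorem stated purely in terms of polynomial phase functions is false once $k\geq 5$, and the correct global formulation, in terms of nilsequences, was established only much later by Green, Tao and Ziegler \cite{GTZ}, as the paper itself discusses. Your hedging (``on some arithmetic progression $P$ of length $N^{c_k}$'') lands on the right kind of conclusion, but the intermediate language about reconstructing ``a genuine polynomial $p$'' from the family $\{\phi_h\}$ overstates the structure that is actually available and obscures where the real difficulty sits. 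Second, what is used is not the full Freiman theorem but a weaker (and easier to prove) structural result that is sufficient for the application, and the exponent $2^{2^{k+9}}$ arises from compounding the losses over the roughly $k-2$ levels of the derivative recursion, each of which incurs Balog--Szemer\'edi--Gowers/Freiman-type costs, rather than from a single invocation of Freiman's theorem.
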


\noindent Green and Tao subsequently improved the bound for $k=4$ to $\exp(-c\sqrt{\log\log n})$ \cite{GT2}. And that is the current state of the art, though for a finite-field analogue of the problem (again with $k=4$) they have a bound of the form $\exp(-(\log n)^c$ \cite{GT3}.

Will Erd\H os's sum-of-reciprocals conjecture be proved any time soon? There seems at least a fair chance that the case $k=3$ will be established within, say, the next ten years. There are significant extra difficulties involved when the progressions are longer, but a significant amount of technology for dealing with longer progressions has now been developed. Whether a bound for $k=3$ will lead to a bound for longer progressions probably depends a lot on what the proof for $k=3$ looks like, and by how much it beats the logarithmic bound. It may also depend on whether the inverse theorem for uniformity norms can be proved with good quantitative bounds.

\section{Erd\H os's discrepancy problem}

Let us now turn to Conjecture \ref{discrepancy}. Discrepancy problems are problems that ask how ``balanced" a colouring of a set can be with respect to some class of subsets. If we have a red/blue colouring $\kappa$ of a set $X$ and $A\subset X$, then define the discrepancy $\mathrm{disc}(\kappa,A)$ of $\kappa$ on $A$ to be the difference between the number of red elements of $A$ and the number of blue elements of $A$. The discrepancy $\mathrm{disc}(\kappa,\mathcal{A})$ of $\kappa$ with respect to $\mathcal{A}$ is then $\max_{A\in\mathcal{A}}\mathrm{disc}(\kappa,A)$. The discrepancy problem for $\mathcal{A}$ is the problem of determining the minimum of $\mathrm{disc}(\kappa,\mathcal{A})$ over all 2-colourings $\kappa$. We can of course think of $\kappa$ as a function from $X$ to $\{-1,1\}$ and then $\mathrm{disc}(\kappa,A)$ is $|\sum_{x\in A}\kappa(x)|$. The Erd\H os discrepancy problem is the discrepancy problem for the set $\mathcal{A}$ of \emph{homogeneous arithmetic progressions}: that is arithmetic progressions of the form $(d,2d,3d,\dots,md)$.

\subsection{Known bounds}

As with Szemer\'edi's theorem, it is tempting to conjecture, again wrongly, that random examples are best for this problem. If we choose a random sequence $(\e_i)$ of 1s and -1s, then the expected size of $\sum_{m=1}^n\e_{md}$ is around $\sqrt{n}$, and occasionally the size will be slightly bigger by a logarithmic factor. 

A simple example that gives rise to much slower growth of these sums is the following, observed by Borwein, Choi and Coons \cite{BCC}. Every positive integer $m$ can be written in a unique way as $(3a\pm 1)3^b$ for integers $a$ and $b$. We let $\e_m=1$ if $m$ is of the form $(3a+1)3^b$ and $-1$ if $m$ is of the form $(3a-1)3^b$. Note that this function is \emph{completely mulitplicative}: $\e_m\e_n=\e_{mn}$ for any two positive integers $m$ and $n$. Therefore, $|\sum_{m=1}^n\e_{md}|=|\e_d\sum_{m=1}^n\e_m|=|\sum_{m=1}^n\e_m|$ for any $n$ and $d$, so analysing the example reduces to calculating the rate of growth of the partial sums of the sequence.

To do this, we partition the integers from 1 to $n$ according to the highest power of $3$ that divides them. Let $A_{b,n}$ be the set of multiples of $3^b$ that are at most $n$ and are not multiples of $3^{b+1}$. Then $\sum_{m\in A_{b,n}}\e_m=1$ if in the ternary representation of $n$ the digit corresponding to multiples of $3^b$ is 1, and 0 otherwise. It follows that $\sum_{m=1}^n\e_m$ is equal to the number of ternary digits of $n$ that are equal to 1. In particular, it has magnitude at most $\log_3n$, which is far smaller than $\sqrt{n}$.

In the light of that example, it is natural to investigate the following weakening of Erd\H os's discrepancy conjecture, which Erd\H os also asked.

\begin{conjecture} \label{multiplicative}
Let $\e_1,\e_2,\e_3,\dots$ be a completely multiplicative sequence taking values in the set $\{-1,1\}$. Then the partial sums $\sum_{m=1}^n\e_m$ are unbounded.
\end{conjecture}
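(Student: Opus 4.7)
The plan is to argue by contradiction: suppose $|S_N|:=|\sum_{m=1}^N \e_m|\leq C$ for every $N$, and extract structural consequences from the complete multiplicativity of $\e$.

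My first step would be analytic. By Abel summation, $L(s):=\sum_m \e_m m^{-s}$ extends to a bounded analytic function on the half-plane $\mathrm{Re}(s)>0$, with $L(s)=s\int_1^\infty S(x)x^{-s-1}\,dx$. On $\mathrm{Re}(s)>1$, the Euler product reads $L(s)=\prod_p(1-\e_p p^{-s})^{-1}$; separating the two types of primes and using $(1+p^{-s})^{-1}=(1-p^{-s})/(1-p^{-2s})$ rewrites this as $L(s)=\zeta(s)\,Q(s)$, where $Q(s)=\prod_{p:\,\e_p=-1}(1-p^{-s})/(1+p^{-s})$.

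The next step is to exploit the pole cancellation. Since $\zeta$ has a simple pole at $s=1$ while $L$ is bounded there, $Q$ must vanish to exactly first order at $s=1$. Now $(1-p^{-1})/(1+p^{-1})=1-2/p+O(p^{-2})$, so $Q(1)=0$ forces $\sum_{p:\,\e_p=-1}1/p=\infty$: the primes at which $\e$ takes value $-1$ have divergent reciprocal sum, with a rate tied quantitatively to the order of vanishing. One would then try to combine this with a Hal\'asz--Wirsing-type mean-value theorem for real multiplicative functions to pin down the distribution of $\e_p$ across primes even more sharply, and ultimately to contradict the boundedness of $S_N$ by comparing the expected order of growth of partial sums against the hypothesis.

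The main obstacle is the large gap between the analytic consequence ``$\sum_{p:\,\e_p=-1}1/p$ diverges with precise rate'' and what one really needs, a quantitative lower bound for $\sup_N|S_N|$. The Borwein--Choi--Coons construction attains $O(\log N)$ partial sums while having $\e_p=-1$ on a positive-density set of primes, so Hal\'asz-type results, which only control the linear mean $S_N/N$, are orders of magnitude too weak to close the argument. Bridging the gap seems to require either substantially sharper control of $L(s)$ uniformly near the line $\mathrm{Re}(s)=0$, or correlation identities for sums of the form $\sum_m \e_m\e_{m+h}$ of the sort studied in work on the Chowla and Elliott conjectures; I do not see how to carry either out by elementary means, and this is where the argument stalls.
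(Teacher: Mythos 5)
The statement you are attempting is presented in the paper as an \emph{open conjecture}: the text explicitly says that it ``is also very much open'' and discusses evidence that it may be essentially as hard as the full discrepancy problem. There is therefore no proof in the paper to compare your attempt against, and any ``proof'' would need to be scrutinized with great suspicion.

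On the merits of your sketch: the analytic steps are correct as far as they go. Bounded partial sums do give, via Abel summation, an analytic extension of $L(s)=\sum_m\e_m m^{-s}$ that is bounded near $s=1$; the Euler factorization $L(s)=\zeta(s)\prod_{p:\,\e_p=-1}\frac{1-p^{-s}}{1+p^{-s}}$ is valid for $\mathrm{Re}(s)>1$; and the forced vanishing of the second factor as $s\to 1^+$ does yield $\sum_{p:\,\e_p=-1}p^{-1}=\infty$. But this is only a necessary condition, and it is very far from sufficient: it rules out nothing except functions agreeing with the constant $1$ on all but a thin set of primes, and it cannot distinguish bounded partial sums from partial sums growing like $\log N$. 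The Borwein--Choi--Coons example you cite already has $\e_p=-1$ on a positive proportion of primes and partial sums of size $O(\log N)$, so the entire difficulty sits in a regime your pole-cancellation argument cannot see. You are also right that Hal\'asz-type mean-value theorems control $N^{-1}\sum_{m\le N}\e_m$, a quantity far weaker than $\sup_N|\sum_{m\le N}\e_m|$, and that no elementary bridge between the two is known. In short: no error in the reasoning, but the gap you flag at the end is genuine and is precisely the open problem; what you have written is an honest and correct reduction to a hard question, not a proof.
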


\noindent Remarkably, this conjecture is also very much open. Later we shall discuss evidence that it may be more or less as hard as the discrepancy problem itself.

What about the other direction? The sequence $(1,-1,-1,1,-1,1,1,-1,-1,1,1)$ has length 11 and has discrepancy 1 (where by ``discrepancy" we mean discrepancy with respect to the set of all homogeneous arithmetic progressions). This turns out to be the longest such sequence \cite{mathias}. Surprisingly, the longest sequence with discrepancy 2 is \emph{much} longer: there are a very large number of sequences of length 1124 with discrepancy 2, and it appears that this is the longest that such a sequence can be, though this has not yet been definitively proved. These experimental results, and almost all of the observations that follow, were discovered by the participants in Polymath5, an online collaboration that attacked the Erd\H os discrepancy problem in 2010 \cite{polymath5}. The fact that these sequences are so long gives one reason that the problem is so hard: it is difficult to imagine what a proof would be like that shows that the discrepancy of a $\pm 1$ sequence tends to infinity with the length of the sequence, while failing to prove the false result that the discrepancy of a sequence of length 1000 is at least 3. 

That is not the only reason for the problem's being hard. Another reason is that it is not easy to turn the problem into an analytic one -- a technique that is extremely helpful for many other problems. It would be very nice if the result were true not because the sequence consists of 1s and -1s but merely because it is large in some appropriate sense: for example, perhaps any sequence with values in $[-1,1]$ such that the average magnitude of the terms is non-zero could be expanded in terms of some cleverly chosen orthonormal basis, and perhaps this would prove that its discrepancy was unbounded. But a very simple example appears to kill off this hope straight away: the discrepancy of the periodic sequence $1,-1,0,1,-1,0,\dots$ is 1, and yet the average magnitude of its terms is $2/3$. Later we shall see that this example is not quite as problematic as it at first appears. Note that this example is a Dirichlet character: it is intriguing that the ``difficult" examples we know of all seem to be built out of characters in simple ways.

\subsection{Variants of the conjecture}

Sometimes, a good way of solving a problem is to replace the statement you are trying to prove by something stronger. There are several promising strengthenings of the Erd\H os discrepancy conjecture. An obvious one is to replace $\pm 1$-valued sequences by sequences that take values in some more general set. The example presented shows that we have to be a little careful about this, but the following conjecture is a reasonable one, and is also open.

\begin{conjecture} \label{hilbertversion}
Let $x_1,x_2,\dots$ be a sequence of unit vectors in a (real or complex) Hilbert space. Then for every $C$ there exist $n,d$ such that $\|\sum_{m=1}^nx_{md}\|\geq C$.
\end{conjecture}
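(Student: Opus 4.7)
The plan is to argue by contradiction: assume that $\sup_{n,d}\|\sum_{m=1}^n x_{md}\|\le C$ for some finite $C$, and try to extract enough rigidity to reduce to the completely multiplicative case of Conjecture~\ref{multiplicative}. Expanding gives
\[\Big\|\sum_{m=1}^n x_{md}\Big\|^2 = n + 2\,\mathrm{Re}\sum_{1\le m<m'\le n}\langle x_{md},x_{m'd}\rangle \le C^2,\]
so the Gram kernel $G(a,b)=\langle x_a,x_b\rangle$ must obey a strong negative-correlation estimate along every dilate of every arithmetic progression, despite being positive semidefinite with $G(a,a)=1$. These two conditions pull in opposite directions and should leave room for only very structured sequences.

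Next I would exploit the dilation-invariance of the hypothesis. Averaging the inequality over $d\in\{1,\dots,D\}$ and reindexing via $e=gd$ with $g=\gcd(m,m')$ shows, in the limit $D\to\infty$, that the Ces\`aro average $\E_{e\le N}\langle x_{ae},x_{be}\rangle$ (for coprime $a,b$) depends only on the pair $(a,b)$; iterating the averaging argument should collapse the dependence further, essentially to the ratio $a/b$. The resulting function $\chi$ on $\mathbb{Q}_{>0}$ is positive semidefinite as a kernel for the multiplicative group structure on $\mathbb{Q}_{>0}$, so by Bochner's theorem it is a convex combination of unitary characters of that group. If one can show that for extremal sequences the combination collapses to a single character, then $x_m=\chi(m)e$ for a fixed unit vector $e\in H$ and a completely multiplicative $\chi:\N\to S^1$, which reduces matters to Conjecture~\ref{multiplicative} in its natural extension to $S^1$-valued sequences.

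The hard part will be precisely the collapse-to-a-single-character step, and this is where the Hilbert-space version looks at least as difficult as the original Erd\H os problem. Negative correlation alone does not rule out a genuinely infinite-rank extremal kernel, and any Polymath5-style extremality/compactness argument would have to be carried out in the vector-valued setting, where the available compactness is weaker. In all honesty, Conjecture~\ref{hilbertversion} seems unlikely to be settled before the scalar cases (Conjectures~\ref{discrepancy} and~\ref{multiplicative}); the value of the plan above is less as an attack than as a framework in which the multiplicative case is identified as the true heart of the matter.
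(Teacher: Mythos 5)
This statement is an open conjecture; the paper states it as a strengthening of the Erd\H os discrepancy problem and offers no proof. What the paper \emph{does} provide are two equivalent reformulations (Propositions~\ref{charikar} and~\ref{charikarconverse}, via positive semidefinite quadratic forms; and Propositions~\ref{diagonalrep} and~\ref{representationconverse}, via decompositions of diagonal matrices into HAP matrices), plus a Fourier reduction, due to Tao, from the \emph{scalar} $\pm1$ case to a statement about completely multiplicative complex sequences. None of these amounts to a proof of Conjecture~\ref{hilbertversion}, and you are right to be honest that your sketch does not either.

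Two concrete gaps in the sketch. First, the hypothesis $\|\sum_{m=1}^n x_{md}\|\le C$ only controls \emph{sums} of correlations over HAP squares; it gives no a priori convergence of the Ces\`aro averages $\E_{e\le N}\langle x_{ae},x_{be}\rangle$, let alone the claim that the limit depends only on $a/b$. (There is no compactness or ergodic averaging at hand that would force this; the kernel can a priori remain genuinely two-variable.) Tao's reduction does obtain a multiplicative object, but by a different and more careful route: it starts from a $\pm1$ sequence, averages over dilates, and uses Fourier analysis to extract a multiplicative structure, and the output statement is about \emph{mean square} partial sums, not a single character. Second, even if you could collapse to a single character $\chi:\mathbb{Q}_{>0}\to S^1$, you would only have reduced to Conjecture~\ref{multiplicative}, which is itself open; the plan is a conditional implication, not a proof. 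The honest self-assessment at the end of your proposal is therefore the correct conclusion: what you have is a plausible framework that explains why the multiplicative case is central, not an argument that settles the conjecture. If you want to push this direction further, the sharper move is to try to adapt Tao's Fourier reduction to take a bounded-discrepancy Hilbert-space sequence as input, rather than to impose ratio-dependence on the Gram kernel by fiat.
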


\noindent Since $\R$ is a Hilbert space, this conjecture is a generalization of Erd\H os's conjecture. A conjecture intermediate between the two is one where the $x_i$ are complex numbers of modulus 1.

A less obvious strengthening was formulated by Gil Kalai (one of the Polymath5 participants), and called the ``modular version" of the Erd\H os discrepancy problem.

\begin{conjecture} \label{modularversion}
For every prime $p$ there exists $N$ such that if $x_1,x_2,\dots,x_N$ is any sequence of non-zero elements of $\Z/p\Z$, then for every $r\in\Z/p\Z$ there exist $n$ and $d$ with $nd\leq N$ and $\sum_{m=1}^nx_{md}\equiv r$ mod $p$.
\end{conjecture}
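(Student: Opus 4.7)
The plan is to attempt a Fourier-analytic attack on $\Z/p\Z$ combined with a structure-versus-randomness dichotomy, while acknowledging at the outset that Conjecture \ref{modularversion} is at least as hard as Conjecture \ref{discrepancy}: given any $\pm 1$ sequence and any bound $C$, choosing a prime $p>2C+1$ and applying Conjecture \ref{modularversion} to the residue $C+1$ forces some homogeneous-progression sum to have absolute value strictly greater than $C$, because the integers in $[-C,C]$ occupy $2C+1<p$ distinct residues modulo $p$, none of them equal to $C+1$. So any genuine proof would solve the Erd\H os discrepancy problem as a corollary, which sets the ambition level.

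First I would set up an exponential sum. Write $g(d,n)=\sum_{m=1}^n x_{md}$ and $\omega=\exp(2\pi i/p)$. The number of admissible pairs $(d,n)$ with $nd\leq N$ and $g(d,n)\equiv r\pmod p$ is
\[\frac{1}{p}\sum_{t=0}^{p-1}\omega^{-tr}\sum_{nd\leq N}\omega^{t g(d,n)},\]
whose $t=0$ contribution is the total count $M\sim N\log N$. It therefore suffices to prove that $|\sum_{nd\leq N}\omega^{tg(d,n)}|=o(M)$ for every $t\not\equiv 0\pmod p$, uniformly in the sequence $x_1,\dots,x_N$.

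The next step would be a dichotomy. Either the sequence $(\omega^{t g(1,n)})_n$, whose successive ratios are the values $\omega^{t x_n}$, is sufficiently pseudorandom that the above exponential sum really is $o(M)$ and we are done; or it correlates with some slowly varying multiplicative-type structure, in which case one would hope, in the spirit of the heuristic behind Conjecture \ref{multiplicative}, to reduce to the case that $(x_m)$ is close to a completely multiplicative function $\chi:\N\to(\Z/p\Z)^*$. For such a $\chi$ one has the clean identity $g(d,n)=\chi(d)T_n$ with $T_n=\sum_{m\leq n}\chi(m)$, and covering every residue splits into (a) showing that $\{\chi(d):d\leq N/n\}$ covers $(\Z/p\Z)^*$ for some $n$, which should follow from standard distribution results for multiplicative functions provided $\chi$ is non-principal, and (b) showing that $T_n\equiv 0\pmod p$ for some $n\leq N$.

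The main obstacle is (b). Establishing unboundedness alone for the partial sums of a general completely multiplicative $\{-1,+1\}$-valued sequence is exactly Conjecture \ref{multiplicative}, which as discussed in the preceding section is wide open and believed to be of comparable difficulty to the whole Erd\H os discrepancy problem. The scheme above should therefore be realistically usable only as far as isolating the multiplicative case as the single remaining obstruction; closing that case is the heart of the matter, and I do not see a route to it that does not essentially solve Conjecture \ref{discrepancy} itself.
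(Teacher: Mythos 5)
Conjecture~\ref{modularversion} is stated in the paper purely as a conjecture, attributed to Gil Kalai and the Polymath5 project; the paper offers no proof and remarks only that the hoped-for polynomial-method attack ``has not succeeded.'' There is therefore no argument in the paper for you to match, and your proposal correctly treats the statement as open.

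Your opening reduction, showing that Conjecture~\ref{modularversion} implies Conjecture~\ref{discrepancy} (take $p>2C+1$ and target $r=C+1$), is correct and slightly sharper than the paper's passing remark that restricting the $x_i$ to $\pm 1$ ``becomes obviously equivalent'' to the Erd\H os problem; both observations are consistent and establish that the modular version is at least as strong as the original. Your Fourier setup over $\Z/p\Z$ is also sound in outline (though the $t=0$ term contributes $M/p$ rather than $M$, a harmless slip since $p$ is fixed), and the identity $g(d,n)=\chi(d)T_n$ for completely multiplicative $\chi$ is right. The difficulty, as you yourself say, is that the ``structure'' side of your dichotomy collapses onto Conjecture~\ref{multiplicative} (unbounded partial sums of completely multiplicative $\pm 1$ sequences), which the paper explicitly notes is ``also very much open'' and conjecturally of comparable difficulty to EDP itself. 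So there is no hidden gap to flag beyond the one you already acknowledge: your sketch correctly locates the obstruction but does not, and at present cannot, close it; neither does the paper.

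One smaller caveat worth recording: even granting the multiplicative reduction, your step (a), that $\{\chi(d):d\le N/n\}$ covers $(\Z/p\Z)^*$, is not automatic. The image of a completely multiplicative $\chi:\N\to(\Z/p\Z)^*$ is the subgroup generated by $\chi$ on primes and may be a proper subgroup of $(\Z/p\Z)^*$; in that case you would need the partial sums $T_n$ to range over enough cosets, which is an additional unresolved distributional claim, not a ``standard'' fact.
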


If we insist that each $x_i$ is $\pm 1$ mod $p$, then the conjecture becomes obviously equivalent to the original Erd\H os problem. However, since the problem does not involve products of the $x_i$, there is nothing special about the numbers $\pm 1$, so in this context it becomes natural to replace the set $\{-1,1\}$ by the set of all non-zero elements. The motivation for this conjecture was the hope that the polynomial method might be applicable to it. So far this has not succeeded, but the modular version gives us a valuable new angle on the problem. 

A possible generalization of the modular version to composite moduli $m$ would be to ask that the $x_i$ are coprime to $m$ (which is obviously a necessary condition if we want to be able to produce all numbers $r$). For amusement only, we state another conjecture here. It is similar in spirit to the more general modular version, but not quite the same. 

\begin{conjecture}\label{irrationalversion}
Let $K$ be a finite set of irrational numbers and let $x_1,x_2,\dots$ be a sequence of elements of $K$. Then the sums $s_{n,d}=\sum_{m=1}^nx_{md}$ are dense mod 1.
\end{conjecture}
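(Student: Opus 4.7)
The plan is to argue by a dichotomy that reduces one case to Weyl's equidistribution theorem.

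First, consider the easy case: suppose there exist $d \geq 1$ and $\alpha \in K$ such that $x_{md} = \alpha$ for all $m \geq M$. Then for $n \geq M$ we have $s_{n,d} = s_{M,d} + (n - M)\alpha$, and since $\alpha$ is irrational, Weyl's theorem tells us that $\{(n - M)\alpha \bmod 1 : n \geq M\}$ is equidistributed (and hence dense) in $[0, 1)$. Therefore $\{s_{n,d} \bmod 1 : n \geq M\}$ is dense, and we are done.

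In the remaining, harder case, for every $d$ the sub-sequence $(x_{md})_{m \geq 1}$ is not eventually constant, so at least two elements of $K$ appear infinitely often. Here the plan is to use the multi-dimensional Kronecker theorem on the multiplicity counts $c_\alpha(n, d) = |\{1 \leq m \leq n : x_{md} = \alpha\}|$, so that $s_{n,d} = \sum_{\alpha \in K} c_\alpha(n, d)\,\alpha$. Given a target $t \in [0, 1)$ and $\epsilon > 0$, one would like to choose $n, d$ so that $s_{n,d}$ falls within $\epsilon$ of $t$ modulo $1$. Since the elements of $K$ are irrational, Kronecker's theorem says that if one were free to choose the multiplicity vector $(c_\alpha)_{\alpha \in K}$ in $\Z^K$ arbitrarily, one could approximate any target. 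The job, therefore, is to show that the constrained multiplicity vectors $(c_\alpha(n, d))$, subject to $c_\alpha \geq 0$ and $\sum_\alpha c_\alpha = n$, still fill out a dense image, by cleverly exploiting both the freedom in $n$ and the freedom in $d$.

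The main obstacle is precisely this last step, and it is a substantial one. A naive attempt would be to find, for each $L$, some $d$ and $\alpha$ with $x_d = x_{2d} = \dots = x_{Ld} = \alpha$, reducing at once to the easy case; but this ``homogeneous van der Waerden'' statement is false, as the colouring $x_i = \alpha$ if $\lfloor \log_2 i \rfloor$ is even and $x_i = \beta$ otherwise satisfies $x_d \neq x_{2d}$ for every $d$ (the parity of $\lfloor \log_2(2d) \rfloor = 1 + \lfloor \log_2 d \rfloor$ flips). So one cannot hope to find a single $d$ on which the sub-sequence is monochromatic, and the proof must exploit the joint freedom in $n$ and $d$ more subtly. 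A possible line of attack is to assume that the sums $s_{n,d}$ avoid some open arc $I \subset \R/\Z$ and derive a structural restriction on the sequence $(x_i)$ --- perhaps a Bohr-set or Fourier analysis in the spirit of Roth's theorem --- that is incompatible with the finiteness of $K$. The analogy with Conjecture~\ref{modularversion}, where a polynomial method was hoped to apply, suggests that algebraic or analytic inputs beyond Weyl and pigeonhole will ultimately be needed; this is where the real difficulty of the conjecture appears to lie.
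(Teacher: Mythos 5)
This statement is Conjecture~\ref{irrationalversion} in the paper, and the paper does not prove it: it is offered explicitly ``for amusement only,'' as an open strengthening whose logical relation even to Conjecture~\ref{modularversion} is left unclear. So there is no proof in the paper for you to be compared against, and the question of whether your argument ``matches'' it does not arise.

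What you have written is, to your credit, not a proof but a candid analysis of the obstruction, and that analysis is essentially sound. The easy case is handled correctly: if $(x_{md})_{m\geq M}$ is constantly $\alpha$ for some $d$, then $s_{n,d}-s_{M,d}=(n-M)\alpha$ and density mod $1$ follows from equidistribution of $n\alpha$. Your observation that one cannot simply force long monochromatic homogeneous progressions is also correct, and the counterexample you give (colour by the parity of $\lfloor\log_2 i\rfloor$, so that $x_d\neq x_{2d}$ for all $d$) is a clean demonstration that there is no ``homogeneous van der Waerden'' theorem to lean on. The Kronecker framing, with the sum $s_{n,d}=\sum_{\alpha\in K}c_\alpha(n,d)\,\alpha$, is the natural way to think about what remains, and you are right that the difficulty is entirely in showing that the constrained multiplicity vectors $(c_\alpha(n,d))$ reach a dense set of residues as $n$ and $d$ both vary. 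That step is where your argument stops, and it is precisely the open content of the conjecture. The suggestion to derive structure on $(x_i)$ from an assumed gap in the closure of $\{s_{n,d}\bmod 1\}$, perhaps via Fourier or Bohr-set methods, is a reasonable speculative direction, but nothing in the paper supports or contradicts it. In short: your proposal correctly identifies this as open, makes no unjustified claims, and its partial reductions are correct, but it does not constitute a proof and no proof exists in the paper to compare it to.
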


\noindent Note that the special case where $K$ is of the form $\{\a,-\a\}$ for an irrational number $\a$ is equivalent to the original discrepancy conjecture. It is not clear whether there are any logical relationships between Conjectures \ref{modularversion} and \ref{irrationalversion}.

\subsection{Some approaches to the conjecture}

Although the Erd\H os discrepancy problem looks very hard, there are some approaches that at least enable one to start thinking seriously about it. Here we discuss three of these approaches.

\subsubsection{Completely multiplicative sequences}

A close look at the very long sequences of discrepancy 2 that were produced experimentally reveals interesting multiplicative structure. The sequences are not completely multiplicative, but they appear to ``want" to have multiplicative features. For example, if you look at the values of a completely multiplicative $\pm 1$ sequence along a geometric progression, then they will either be constant or alternating. In the long sequences of discrepancy 2 we do not see that behaviour, but we do see quasiperiodic behaviour, at least for a while: towards the end, the patterns break down. There is a natural, but speculative, interpretation of this. The sequences appear to be some kind of ``projection" to the set of $\pm1$ sequences of highly structured sequences taking values in $\C$. Towards the end, if the structure is followed too closely, the discrepancy rises to 3, but for a while that can be countered by simply switching the signs of a few terms in the sequence. If those terms correspond to integers with not many factors, then not many homogeneous progressions are affected, so one can extend the length of the sequence by sacrificing the structure. But since it was the structure that allowed the sequence to get long in the first place, this process is eventually doomed: one has to make more and more ad hoc tweaks, and eventually it becomes impossible to continue.

This picture suggests the following line of attack. Perhaps one could attempt to show that the worst examples -- that is, the ones with lowest discrepancy -- have to have some kind of multiplicative structure. Then one could attempt to prove the easier (one hopes) statement that a sequence with multiplicative structure must have unbounded discrepancy.

An approach like this might seem a bit fanciful. Remarkably, however, there is a precise reduction from the Erd\H os discrepancy problem to a related problem about multiplicative sequences, discovered by Terence Tao (another Polymath5 participant). With the help of a few lines of Fourier analysis, he proved the following result \cite{polymath52}.

\begin{proposition}
Suppose that there exists an infinite $\pm 1$ sequence of discrepancy at most $C$. Then there exists a completely multiplicative sequence $z_1,z_2,\dots$ of complex numbers of modulus 1 such that the averages $N^{-1}\sum_{n=1}^N|\sum_{i=1}^nz_i|^2$ are bounded above by a constant depending on $C$.
\end{proposition}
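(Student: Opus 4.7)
The plan is to use Fourier analysis on the compact abelian group $X=\prod_{p\text{ prime}}S^1$, whose points parametrize the completely multiplicative sequences $z:\N\to S^1$ we are looking for: given $(z_p)_p\in X$, extend to $n\mapsto\prod_p z_p^{v_p(n)}$, where $v_p(n)$ denotes the exponent of $p$ in the prime factorisation of $n$. The strategy is to construct a probability measure $\mu$ on $X$, distilled from $\e$, such that the desired $L^2$-averaged bound on partial sums holds in expectation over $z\sim\mu$; any $z$ in the support where the bound holds pointwise then works.

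First I would set up the dual estimate. Squaring the hypothesis $|S(N,d)|\le C$ for $S(N,d)=\sum_{m=1}^N\e_{md}$ and summing against logarithmic weights yields
\[
\frac{1}{\log D}\sum_{d\le D}\frac{|S(\lfloor N/d\rfloor,d)|^2}{d}\le C^2.
\]
Expanding the squares and changing variables $n_i=m_id$ rewrites this as a weighted double sum over pairs $(n_1,n_2)\le N$ involving $\e_{n_1}\e_{n_2}$ and a number-theoretic kernel depending on $\gcd(n_1,n_2)$. The core Fourier-analytic point is that this kernel is, up to negligible error, exactly the Fourier transform on $X$ of a positive measure $\mu_D$; equivalently, the kernel is positive-definite in the multiplicative sense.

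Passing to a weak-$\ast$ subsequential limit $\mu$ of the $\mu_D$, the estimate then translates into
\[
\int_X \frac{1}{N}\sum_{n=1}^N\Bigl|\sum_{i=1}^n z(i)\Bigr|^2 d\mu(z)=O_C(1),
\]
and a pigeonhole argument on $\operatorname{supp}(\mu)$ delivers a $z$ satisfying the required bound. Note that, pleasantly, the trivial character $z\equiv 1$ is automatically excluded by this bound, since its partial sums have $L^2$ average $\sim N^2/3$.

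The main obstacle is ensuring that $\mu$ is genuinely supported on $X$, i.e. on completely multiplicative sequences with $|z_n|=1$ for every $n$, rather than on the larger polydisk of merely multiplicative sequences of modulus $\le 1$. This is where the choice of logarithmic weight $1/d$ is essential: it corresponds to Haar-like averaging on the multiplicative group $\mathbb{Q}^+$, and (one hopes) is what forces both exact multiplicativity and full modulus in the limit, by exploiting the symmetries $\e_{(ab)d}=\e_{a(bd)}=\e_{b(ad)}$ under the averaging. I expect this rigidity step is precisely where the ``few lines of Fourier analysis'' alluded to by Tao do their real work.
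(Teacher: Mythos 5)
Your blueprint---parametrise completely multiplicative phases by $X=\prod_p S^1$, build a probability measure $\mu$ on $X$ whose Fourier coefficients on the discrete group $\mathbb{Q}^+$ record the correlations of $\e$, push the discrepancy bound into an $L^2$ estimate averaged over $\mu$, and select $z$ from the support---is the right shape. But the logarithmic averaging you propose does not deliver the one property on which everything hinges, and the worry you flag (the polydisk) is not where the danger lies.

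To invoke Bochner's theorem on the discrete group $\mathbb{Q}^+$ you need the limiting correlation $\kappa(m_1,m_2)$ to descend to a function of the ratio $m_1/m_2$ alone. Equivalently, a weak-$\ast$ limit $\nu$ of the probability measures $\nu_D:=\sum_{d\le D}\tfrac{1}{d\log D}\,\delta_{T_d\e}$ on $\{-1,1\}^\N$ (where $T_d\e(n)=\e(nd)$) would have to be invariant under every dilation $T_k$. With logarithmic weights this fails irreparably: $T_k\nu_D$ is supported entirely on $\{T_e\e:k\mid e\}$, a set on which $\nu_D$ itself carries mass only about $1/k$, so $\|T_k\nu_D-\nu_D\|$ stays near $2(1-1/k)$ however large $D$ is. The same defect shows in your kernel: $\sum_{d\mid\gcd(n_1,n_2),\,d\le D}1/d=\sum_{d\le D}d^{-1}\mathbf{1}[d\mid n_1]\mathbf{1}[d\mid n_2]$ is positive semidefinite as a kernel on $\N\times\N$, but it is \emph{not} a function of $n_1/n_2$ (compare $(1,2)$ with $(2,4)$), so it cannot be the Fourier transform of any measure on $X$, and your ``core Fourier-analytic point'' is false as stated. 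The repair is to average not with $1/d$ over $d\le D$ but over a multiplicatively F\o lner family, e.g.\ uniformly over $F_M=\{p_1^{a_1}\cdots p_M^{a_M}:0\le a_i\le M\}$, for which $|kF_M\triangle F_M|/|F_M|\to 0$. Then the limit $\nu$ is genuinely dilation-invariant, $\kappa(km_1,km_2)=\kappa(m_1,m_2)$, the kernel descends to a positive-definite function on $\mathbb{Q}^+$, and Bochner produces a probability measure $\mu$ on $\widehat{\mathbb{Q}^+}=\prod_p S^1$; the support thus lands on the distinguished torus automatically, so the polydisk concern is a red herring. After that, the Fatou/pigeonhole step finishes broadly as you describe.
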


Thus, to prove the Erd\H os discrepancy problem, it is enough to prove the following conjecture about completely multiplicative complex-valued sequences.

\begin{conjecture}
There exists a function $\omega:\N\to\R$ tending to infinity with the following property. Let $z_1,z_2,\dots$ be any completely multiplicative sequence $z_1,z_2,\dots$ of complex numbers of modulus 1. For each $n$ let $s_n$ be the $n$th partial sum of this sequence. Then $(|s_1|^2+\dots+|s_N|^2)/N\geq\omega(N)$ for every $N$.
\end{conjecture}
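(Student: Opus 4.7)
The plan is to approach the conjecture via the machinery of pretentious multiplicative functions, developed by Hal\'asz and refined by Granville--Soundararajan. For each completely multiplicative $z:\N\to S^1$ one forms the pretentiousness distance
\[
\mathbb{D}(z,\chi n^{it};N)^2=\sum_{p\leq N}\frac{1-\mathrm{Re}\,(z_p\overline{\chi(p)}p^{-it})}{p}
\]
to each ``twisted character'' $\chi(n)n^{it}$, where $\chi$ ranges over Dirichlet characters and $t$ over $\R$. Hal\'asz's theorem bounds $|s_N|/N$ in terms of $\exp(-\min_{\chi,t}\mathbb{D}(z,\chi n^{it};N)^2)$, so it is natural to split into a pretentious and a non-pretentious case.

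In the \emph{pretentious} regime, $z$ is close to some twisted character $\chi(n)n^{it}$, and I would transfer the analysis to that character. Partial sums of a fixed non-principal Dirichlet character are of size $\Theta(\sqrt{q}\log q)$ on blocks of length a multiple of the conductor, and the $n^{it}$ factor introduces only a smooth oscillation that can be handled by Abel summation. Working the details out carefully should give $|s_N|\gg N^{1/2-o(1)}$ on a positive density of $N$, so that the Ces\`aro average of $|s_n|^2$ grows like a power of $N$; this is far more than needed.

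In the \emph{non-pretentious} regime I would expand the second moment,
\[
\sum_{n=1}^N|s_n|^2 = \sum_{m_1,m_2\leq N}z_{m_1}\bar z_{m_2}\bigl(N-\max(m_1,m_2)+1\bigr),
\]
whose diagonal contribution $m_1=m_2$ equals exactly $\tfrac12 N^2+O(N)$. The off-diagonal terms, after writing $m_i=dk_i$ with $\gcd(k_1,k_2)=1$ and using complete multiplicativity to collapse $z_{m_1}\bar z_{m_2}=z_{k_1}\bar z_{k_2}$, reduce by M\"obius inversion to expressions built from the inner sums $s_M$ for various $M\leq N$. Feeding the Hal\'asz smallness of these inner sums into the estimate, one hopes to show that the off-diagonal cannot cancel the $\Omega(N^2)$ diagonal down to an average of size $O(1)$, yielding an $\omega(N)\to\infty$ lower bound on the second-moment average.

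The main obstacle will be the non-pretentious regime, specifically the circularity just noted: the off-diagonal terms are controlled by the very sums $s_M$ one is trying to lower-bound. Breaking this circularity is the crux, and plausibly requires either an iteration/bootstrapping scheme in which one leverages the bound at scale $N$ to sharpen itself at scale $N^{1+\delta}$, or a separation of the contributions from small and large primes (the known ``worst'' examples such as Borwein--Choi--Coons derive their small partial sums entirely from behaviour at a single small prime, which is a very rigid constraint). This circularity is presumably the reason the conjecture has stubbornly resisted the now-mature pretentious technology; a genuinely new angle, perhaps via the Dirichlet series $F(s)=\sum_n z_n n^{-s}$ and a Perron-formula analysis of its behaviour near $\mathrm{Re}(s)=1$, may in the end be needed.
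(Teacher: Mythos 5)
This statement is presented in the paper as an open \emph{conjecture}, not a theorem: the paper proves nothing here, it only explains (via Tao's Fourier reduction, Proposition~3.1 in the surrounding text) that establishing this conjecture would suffice to resolve the Erd\H os discrepancy problem. So there is no ``paper's proof'' to compare against, and you should not have expected to find one. Your submission is, correctly, labelled a proposal rather than a proof, and you are candid about the unresolved circularity in the non-pretentious case; that honesty is appropriate, since what you have written does not close the argument.

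That said, there is a concrete error in the part you present with more confidence, namely the pretentious regime. You assert that when $z$ pretends to be a twisted character $\chi(n)n^{it}$ with $\chi$ non-principal, ``working the details out carefully should give $|s_N|\gg N^{1/2-o(1)}$ on a positive density of $N$.'' This is false, and the counterexample is discussed in the very paper you are reading: the Borwein--Choi--Coons sequence $\e_m = \pm 1$ according to whether $m=(3a\pm 1)3^b$ is completely multiplicative, is maximally pretentious toward the non-principal character mod~$3$ (it agrees with it away from the prime $3$), and yet its partial sums satisfy $|s_N| \leq \log_3 N$. P\'olya--Vinogradov tells you that a non-principal character has \emph{bounded} partial sums over blocks, so pretending to such a character tends to make $s_N$ \emph{small}, not of size $N^{1/2-o(1)}$. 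The pretentious case is therefore the hard case, not the easy one: there the partial sums really can be as small as $O(\log N)$, and the entire difficulty of the conjecture is to show they cannot be $O(1)$. (For what it is worth, when Tao eventually resolved the Erd\H os discrepancy problem in 2015, after this survey was written, the pretentious case did require delicate treatment precisely for this reason.) You should reverse your intuition here: the non-pretentious case is the one Hal\'asz handles directly, and the pretentious case is where the content of the problem lives.
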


This is not quite the same as saying that every completely multiplicative sequence has unbounded discrepancy, even if we generalize to the complex case. What it says is not just that the \emph{worst} partial sums of such a sequence should be large, but that the \emph{average} partial sums should be large (uniformly over all such sequences). However, if the weaker statement is true, then it looks likely that the stronger statement will be true as well.

A pessimistic view of this reduction would be to say that it shows that the multiplicative problem is probably just as hard as the original. However, completely multiplicative sequences have so much more structure than arbitrary sequences that it is not clear that such pessimism is justified.

\subsubsection{Semidefinite programming}

The following very nice observation was made by Moses Charikar (yet another Polymath5 participant), which offers a way round the obstacle that the sequence $1,-1,0,1,-1,0,\dots$ has bounded discrepancy. 

\begin{proposition} \label{charikar}
Suppose that we can find non-negative coefficients $c_{m,d}$ for each pair of natural numbers $m$ and $d$, and a sequence $(b_n)$ such that $\sum_{m,d}c_{m,d}=1$, $\sum_nb_n=\infty$, and the real quadratic form 
\[\sum_{m,d}c_{m,d}(x_d+x_{2d}+\dots+x_{md})^2-\sum_nb_nx_n^2\]
is positive semidefinite. Then every $\pm 1$ sequence has unbounded discrepancy.
\end{proposition}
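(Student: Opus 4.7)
The plan is to argue by contradiction, substituting a hypothetical bounded-discrepancy $\pm 1$ sequence directly into the positive semidefinite form. Suppose $(\e_n)$ is a $\pm 1$ sequence with $|\e_d + \e_{2d} + \cdots + \e_{md}| \leq C$ for all positive integers $m, d$. Fix a large $N$ and consider the finitely supported vector $x^{(N)}$ defined by $x^{(N)}_n = \e_n$ for $n \leq N$ and $x^{(N)}_n = 0$ for $n > N$. Applying the hypothesis that the quadratic form is positive semidefinite to $x^{(N)}$ gives
\[\sum_{m,d} c_{m,d}\bigl(x^{(N)}_d + x^{(N)}_{2d} + \cdots + x^{(N)}_{md}\bigr)^2 \;\geq\; \sum_n b_n \bigl(x^{(N)}_n\bigr)^2 \;=\; \sum_{n=1}^N b_n.\]

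Next I would bound the left-hand side using the discrepancy hypothesis. For each pair $(m, d)$, only those summands $x^{(N)}_{jd}$ with $jd \leq N$ are nonzero, so the inner sum equals $\e_d + \e_{2d} + \cdots + \e_{kd}$ for $k = \min(m, \lfloor N/d \rfloor)$. This is a partial sum along a homogeneous arithmetic progression with common difference $d$, hence has absolute value at most $C$, and the square is at most $C^2$. Since $\sum_{m,d} c_{m,d} = 1$, the entire left-hand side is at most $C^2$. Combining this with the previous inequality forces $\sum_{n=1}^N b_n \leq C^2$ for every $N$, which contradicts the assumption $\sum_n b_n = \infty$, completing the proof.

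There is essentially no technical obstacle in the argument itself — it is a one-line substitution once the truncation is in place, and the only point to verify carefully is that the truncated inner sums $x^{(N)}_d + \cdots + x^{(N)}_{md}$ are genuinely partial sums of $(\e_n)$ along a homogeneous AP (so that the discrepancy bound applies), which is immediate from the definition of $x^{(N)}$. The real difficulty that this proposition does not address, and which gives the subsection its name, is the semidefinite programming problem of actually exhibiting non-negative weights $c_{m,d}$ and a divergent non-negative sequence $(b_n)$ such that the displayed form is positive semidefinite; that is presumably where the interesting work lies in trying to apply the proposition.
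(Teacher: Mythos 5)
Your proof is correct and follows essentially the same line as the paper: plug the $\pm 1$ sequence into the positive semidefinite form, bound the HAP-sum side by the assumed discrepancy bound, and conclude that the divergence of $\sum_n b_n$ forces a contradiction. The only difference is that you truncate to finitely supported vectors $x^{(N)}$ before applying positive semidefiniteness, which is a sensible way to make rigorous a step the paper carries out informally with infinite sums.
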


\begin{proof}
If $(\e_n)$ is a $\pm 1$ sequence, then the positive semidefiniteness of the quadratic form tells us that
\[\sum_{m,d}c_{m,d}(\e_d+\e_{2d}+\dots+\e_{md})^2\geq \sum_nb_n\e_n^2=\sum_nb_n\]
Since $\sum_{m,d}c_{m,d}=1$ and $\sum_nb_n=\infty$, it follows that the sums $\e_d+\dots+\e_{md}$ are unbounded.
\end{proof}

The same argument shows that if $\sum_nb_n=C$ then there exist $m,d$ such that $|\e_d+\e_{2d}+\dots+\e_{md}|\geq C^{1/2}$. It also proves the Hilbert-space version of the Erd\H os discrepancy conjecture, since if the $x_i$ are vectors in a Hilbert space, then the non-negative definiteness of the quadratic form implies that
\[\sum_{m,d}c_{m,d}\|x_d+x_{2d}+\dots+x_{md}\|^2-\sum_nb_n\|x_n\|^2\]
is non-negative (as can be seen by expanding out the norms and looking at each coordinate).

Less obviously, the existence of a quadratic form satisfying the conditions of Proposition \ref{charikar} is actually \emph{equivalent} to a positive solution to the Hilbert-space version of the conjecture. 

\begin{proposition} \label{charikarconverse}
Suppose that every infinite sequence of unit vectors in a real Hilbert space has unbounded discrepancy. Then for every $C$ there exists $N$, a set of non-negative coefficients $c_{m,d}$ for each pair of natural numbers $m$ and $d$ with $md\leq N$, and a sequence $(b_1,\dots,b_N)$ such that $\sum_{m,d}c_{m,d}=1$, $\sum_{n=1}^Nb_n\geq C$, and the real quadratic form 
\[\sum_{m,d}c_{m,d}(x_d+x_{2d}+\dots+x_{md})^2-\sum_nb_nx_n^2\]
is positive semidefinite.
\end{proposition}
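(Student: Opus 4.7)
The natural approach is semidefinite-programming duality: the desired $(c,b)$ will arise as the solution of a primal SDP whose dual is controlled directly by the unit-vector hypothesis. Let $v_{m,d}\in\{0,1\}^N$ denote the indicator of the homogeneous progression $\{d,2d,\dots,md\}$ inside $\{1,\dots,N\}$, and set
\[
P_N \;=\; \sup\Bigl\{\textstyle\sum_{n=1}^N b_n \,:\, c_{m,d}\ge 0,\ \sum_{m,d:\,md\le N}c_{m,d}=1,\ \sum_{m,d}c_{m,d}v_{m,d}v_{m,d}^T \succeq \mathrm{diag}(b)\Bigr\}.
\]
The conclusion of the proposition is exactly the assertion that $P_N\to\infty$ as $N\to\infty$. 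A routine Lagrangian calculation (multipliers $\mu_k\ge 0$ for $c\ge 0$, $\lambda\in\R$ for $\sum c=1$, and $Y\succeq 0$ for the matrix inequality; the fact that $b_n$ is unconstrained forces $Y_{nn}=1$) identifies the dual as
\[
D_N \;=\; \min\Bigl\{\max_{m,d:\,md\le N} v_{m,d}^T Y v_{m,d}\,:\, Y\succeq 0,\ Y_{nn}=1\text{ for all }n\Bigr\}.
\]
Both programs are strictly feasible -- the dual at $Y=I$, the primal by taking $c$ uniform on the simplex and the $b_n$ sufficiently negative -- so Slater's condition applies and strong duality gives $P_N=D_N$.

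Any $Y\succeq 0$ with $Y_{nn}=1$ is the Gram matrix of some unit vectors $x_1,\dots,x_N$ in a finite-dimensional real Hilbert space, and $v_{m,d}^T Y v_{m,d}=\|x_d+x_{2d}+\cdots+x_{md}\|^2$. So $D_N\ge C$ is precisely the statement that every sequence of $N$ unit vectors contains a homogeneous progression of squared norm at least $C$. This follows from the hypothesis by a standard diagonal-compactness argument: if some $C$ were a counterexample for every $N$, one would pick a bad sequence for each $N$, record its Gram matrix (entries in $[-1,1]$), pass by a diagonal (Tychonoff) subsequence to an entrywise limit, observe that this infinite matrix still has ones on the diagonal and PSD principal submatrices (since entrywise limits of PSD matrices are PSD), realize it as the Gram matrix of an infinite sequence of unit vectors in $\ell^2$ via Cholesky factorization, and note that all its homogeneous-progression squared norms are bounded by $C$, contradicting the hypothesis.

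Combining the two pieces: given $C$, pick $N$ by the compactness argument so that $D_N\ge C$, then strong duality hands back primal-feasible $(c,b)$ with $\sum_{n=1}^N b_n\ge C$, and these are exactly the coefficients the proposition demands. The main work is in the two verifications -- checking Slater's condition so that SDP strong duality actually gives $P_N=D_N$, and carrying out the compactness extraction carefully enough to realize the limiting PSD kernel by genuine unit vectors in a real Hilbert space -- both routine but needing some attention. The conceptual point is that the quadratic-form construction of Proposition~\ref{charikar} is nothing more than the dual certificate for the Hilbert-space version of the Erd\H os discrepancy problem; Conjecture~\ref{hilbertversion} and the existence of such certificates are therefore formally equivalent.
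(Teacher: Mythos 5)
Your proof is correct and is essentially the paper's argument repackaged: the paper applies the Hahn--Banach separation theorem directly, separating diagonal matrices of trace at least $C$ from convex combinations of HAP Gram matrices minus the PSD cone, and then reads the separating matrix as a Gram matrix of unit vectors of bounded discrepancy; you encode the same separation step as SDP strong duality verified through Slater's condition. The final compactness step, passing from sequences of $N$ unit vectors of bounded discrepancy to an infinite sequence contradicting the hypothesis, is the same in both.
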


\begin{proof}
For each $m,d$ with $md\leq N$ define $A_{m,d}$ to be the $N\times N$ matrix with $ij$th entry equal to 1 if both $i$ and $j$ belong to the arithmetic progression $\{d,2d,\dots,md\}$ and 0 otherwise. Then the conclusion tells us that there exists an $N\times N$ diagonal matrix with entries adding up to at least $C$ that can be written as a convex combination of the matrices $A_{m,d}$ minus a positive semidefinite matrix. If this cannot be done, then by the Hahn-Banach separation theorem there must be a functional that separates the convex set of diagonal matrices with entries adding up to at least $C$ from the convex set consisting of convex combinations of the $A_{m,d}$ minus positive semidefinite matrices. Let us regard this functional as an $N\times N$ matrix $B$ in the inner product space that consists of all $N\times N$ matrices with square-summable entries and the obvious inner product.

What properties must this matrix $B$ have? We may suppose that $\langle D,B\rangle\geq 1$ for every diagonal matrix with entries adding up to at least $C$ and $\langle A,B\rangle\leq 1$ whenever $A$ is a convex combination of the matrices $A_{m,d}$ minus a positive semidefinite matrix. The first condition implies that $B$ is constant on the diagonal and that the constant is at least $C^{-1}$.

The second condition implies that $B$ has non-negative inner product with every positive semidefinite matrix, since if $A$ were a counterexample, then we could make $\langle -\lambda A,B\rangle$ arbitrarily large and positive by taking $\lambda$ sufficiently large and positive. In particular, if $x\in\R^N$ and we take $A$ to be the positive semidefinite matrix $x\otimes x$ (that is, the matrix with $ij$th element $x_ix_j$), then $\langle x,Bx\rangle=\langle x\otimes x,B\rangle\geq 0$, so $B$ is itself positive semidefinite. This is well known to be equivalent to the assertion that there are vectors $v_1,\dots,v_N$ in an inner product space such that $B_{ij}=\langle v_i,v_j\rangle$ for every $i,j$. Since $B_{ii}=c\geq C^{-1}$ for every $i$, we find that each vector $v_i$ has norm $\sqrt c$. 

Finally, since the zero matrix is positive semidefinite, the second condition also implies that $B$ must have inner product at most 1 with each $A_{m,d}$. In terms of the vectors $v_i$, this is precisely the statement that $\|v_d+v_{2d}+\dots+v_{md}\|^2\leq 1$, as can be seen by expanding the left-hand side.

If we now rescale so that the $v_i$ become unit vectors, this last inequality changes to $\|v_d+v_{2d}+\dots+v_{md}\|^2\leq K$, for some constant $K\leq C$. 

Therefore, if the conclusion fails for some constant $C$, we can find, for each $N$ a sequence of $N$ unit vectors of discrepancy at most $\sqrt{C}$. After applying a suitable rotation, we may assume that for each $n$ the $n$th vector in this sequence is spanned by the first $n$ standard basis vectors of $\R^N$. Therefore, an easy compactness argument gives us an infinite sequence of unit vectors with discrepancy at most $\sqrt{C}$, a contradiction. 
\end{proof}

Recall that the problem with the sequence $1,-1,0,1,-1,0,\dots$ is that it is ``large" in a natural sense (namely having average magnitude bounded away from zero), but has bounded discrepancy. What Proposition \ref{charikar} tells us is that there is a chance of proving that every sequence that is large with respect to a suitable \emph{weighted} norm -- the weighted $\ell_2$-norm with weights $b_n$ -- has unbounded discrepancy. Thus, there is after all a way of making the problem analytic rather than purely combinatorial. 

What can we say about a set of weights that would work? The lesson of the troublesome $1,-1,0,1,-1,0,\dots$ example is that the weights should be concentrated on numbers with many factors. For example, if the sum of the $b_n$ over all non-multiples of 3 is infinite, then the weights cannot work, since then if $(x_n)$ is the troublesome sequence, we have $\sum_nb_nx_n^2=\infty$ and yet the discrepancy is finite. (This does not contradict Proposition \ref{charikar}: it just means that for this choice of $(b_n)$ we cannot find appropriate coefficients $c_{m,d}$.) 

It is not easy to write down a set of weights that has any chance of working -- in fact, that is worth stating as an open problem -- albeit not a wholly precise one.

\begin{problem}
Find a system of weights $(b_n)$ with $\sum_nb_n=\infty$ for which it is reasonable to conjecture that every sequence $(x_n)$ such that $\sum_nb_nx_n^2=\infty$ has unbounded discrepancy.
\end{problem}

One of the things that makes Proposition \ref{charikar} interesting is that it suggests a experimental line of attack on the Erd\H os discrepancy problem. First, one uses semidefinite programming to determine, for some large $N$, the sequence $(b_1,b_2,\dots,b_N)$ with largest sum such that the diagonal matrix with those weights can be written as a convex combination of the matrices $A_{m,d}$ minus a positive semidefinite matrix. Next, one stares hard at the sequence and tries to spot enough patterns in it to make a guess at an infinite sequence that would work. Finally, one attempts to decompose the corresponding infinite diagonal matrix (perhaps using the experimental values of the coefficients $c_{m,d}$ as a guide). 

Some efforts were made by Polymath5 participants in this direction, but so far they have not succeeded. One problem is that cutting off sharply at $N$ appears to introduce misleading ``edge-effects". But even if one finds ways of smoothing the cutoff, the experimental data is hard to interpret, though it certainly confirms the principle that the weights $b_n$ should be concentrated on positive integers $n$ with many factors. Another serious difficulty is that because we already know that there are very long sequences with small discrepancy, the matrices we find experimentally will have to be extremely large if they are to give us non-trivial lower bounds for discrepancy -- large enough that the semidefinite programming algorithms take a long time to run. Despite these difficulties, this still seems like a promising approach that should be explored further.

\subsubsection{Representing diagonal matrices}

We end by mentioning an approach based on an observation that is somewhat similar to Proposition \ref{charikar} but that does not involve the slightly tricky concept of positive semidefiniteness. This approach was again one of the fruits of the Polymath5 discussion.

Let us define a \emph{HAP matrix} to be a matrix $A$ of the following form. Take two homogeneous arithmetic progressions $P$ and $Q$ and define $A_{ij}$ to be $1$ if $i\in P$ and $j\in Q$ and 0 otherwise. In other words, a HAP matrix is the characteristic function of a product of two homogeneous arithmetic progressions. 

\begin{proposition} \label{diagonalrep}
Suppose that there exists an $N\times N$ diagonal matrix of trace at least $C$ that belongs to the symmetric convex hull of all HAP matrices. Then every $\pm 1$ sequence of length $N$ has discrepancy at least $\sqrt C$.
\end{proposition}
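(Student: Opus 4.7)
The plan is to evaluate the quadratic form $\epsilon \mapsto \epsilon^{T} M \epsilon$ in two different ways, where $M$ denotes the given diagonal matrix. Writing $M=\sum_{k}\lambda_{k}A_{k}$ as an element of the symmetric convex hull of HAP matrices, we have $\sum_{k}|\lambda_{k}|\leq 1$, with each $A_{k}=\mathbf{1}_{P_{k}}\mathbf{1}_{Q_{k}}^{T}$ of rank one, corresponding to a pair $(P_{k},Q_{k})$ of homogeneous arithmetic progressions.

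On the diagonal side, since $\epsilon_{i}^{2}=1$ for every $i$, the quadratic form recovers the trace: $\epsilon^{T}M\epsilon=\sum_{i}M_{ii}\epsilon_{i}^{2}=\mathrm{tr}(M)\geq C$. On the HAP side, the crucial feature to exploit is the rank-one factorization, which gives
\[\epsilon^{T}A_{k}\epsilon=\Bigl(\sum_{i\in P_{k}}\epsilon_{i}\Bigr)\Bigl(\sum_{j\in Q_{k}}\epsilon_{j}\Bigr).\]
If $D$ denotes the discrepancy of $\epsilon$, then each factor has modulus at most $D$, so $|\epsilon^{T}A_{k}\epsilon|\leq D^{2}$. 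The triangle inequality together with the $\ell_{1}$ bound on the coefficients yields $|\epsilon^{T}M\epsilon|\leq\sum_{k}|\lambda_{k}|D^{2}\leq D^{2}$. Combining the two bounds gives $C\leq D^{2}$, i.e.\ $D\geq\sqrt{C}$.

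The main observation, and really the only step of substance, is that HAP matrices are rank one, so their quadratic forms factor into products of partial sums along homogeneous APs and the discrepancy bound propagates quadratically. There is no genuine obstacle in carrying this out; the only minor interpretive point is that ``symmetric convex hull'' should be read as the absolutely convex hull $\{\sum_{k}\lambda_{k}A_{k}:\sum_{k}|\lambda_{k}|\leq 1\}$, so that the $\lambda_{k}$ may have either sign, but the triangle inequality absorbs this harmlessly.
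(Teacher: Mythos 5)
Your proof is correct and follows essentially the same route as the paper: evaluate $\epsilon^{T}M\epsilon$ once via the diagonal to get the trace, once via the rank-one HAP decomposition, and compare. The only cosmetic difference is that you bound the whole sum by $D^{2}$ via the triangle inequality, whereas the paper first extracts a single index $k$ with $|\epsilon^{T}A_{k}\epsilon|\geq C$ and then concludes; these are logically equivalent.
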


\begin{proof} Let the diagonal matrix $D$ have diagonal entries $b_1,\dots,b_N$ and suppose that it can be written as $\sum_i\lambda_iA_i$ with $\sum_i|\lambda_i|\leq 1$ and with each $A_i$ a HAP matrix. Let $\e=(\e_1,\dots,\e_N)$ be a $\pm 1$ sequence. Then
\[C\leq\sum_nb_n\e_n^2=\langle\e,D\e\rangle=\sum_i\lambda_i\langle\e,A_i\e\rangle\ .\]
It follows that there exists $i$ such that $|\langle\e,A_i\e\rangle|\geq C$. If $P$ and $Q$ are the HAPs from which $A_i$ is built, then 
\[\langle\e,A_i\e\rangle=(\sum_{i\in P}\e_i)(\sum_{j\in Q}\e_j)\ ,\]
which implies that at least one of $\sum_{i\in P}\e_i$ and $\sum_{j\in Q}\e_j$ has modulus at least $\sqrt{C}$.
\end{proof}

Once again, the argument generalizes easily to unit vectors in a Hilbert space. And again there is an implication in the other direction.

\begin{proposition} \label{representationconverse}
Let $C$ be a constant, let $N$ be a positive integer, and suppose that for every $N\times N$ real matrix $A=(a_{ij})$ with 1s on the diagonal there exist homogeneous arithmetic progressions $P$ and $Q$ such that $|\sum_{i\in P}\sum_{j\in Q}a_{ij}|\geq C$. Then there is a diagonal matrix of trace at least $C$ that belongs to the symmetric convex hull of all HAP matrices.
\end{proposition}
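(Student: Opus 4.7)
The plan is to mirror the duality argument from Proposition \ref{charikarconverse}, working in the space of $N\times N$ real matrices with the Frobenius inner product $\langle X,Y\rangle=\sum_{ij}X_{ij}Y_{ij}$. First I would suppose for contradiction that no diagonal matrix of trace at least $C$ lies in the symmetric convex hull $S_2$ of the HAP matrices. Since $N$ is fixed, there are only finitely many HAP matrices, so $S_2$ is compact; the set $S_1$ of diagonal matrices of trace at least $C$ is a closed convex set; and by assumption $S_1\cap S_2=\emptyset$. A strong separating hyperplane theorem therefore produces a matrix $B$ and reals $\beta<\gamma$ with $\langle B,M\rangle\le\beta$ for every $M\in S_2$ and $\langle B,D\rangle\ge\gamma$ for every $D\in S_1$. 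Because $S_2$ is symmetric, the first inequality is equivalent to $|\langle B,A_{P,Q}\rangle|\le\beta$ for every HAP matrix $A_{P,Q}$.

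Next I would pin down the structure that $B$ must have from the $S_1$ side. The inequality $\langle B,D\rangle=\sum_i B_{ii}D_{ii}\ge\gamma$ has to hold for every diagonal $D$ with $\sum_iD_{ii}\ge C$. Since the $D_{ii}$ can be pushed to $\pm\infty$ in opposite directions at fixed trace, this forces all diagonal entries of $B$ to coincide in a single value $c$; and since they must be pushed to a positive lower bound against diagonals with trace exactly $C$, one gets $c>0$ and $\gamma=cC$. (The off-diagonal entries of $B$ are unconstrained by the $S_1$ side.)

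Finally I would set $A:=B/c$. This matrix has $1$s on the diagonal, so the hypothesis of the proposition supplies HAPs $P,Q$ with $|\sum_{i\in P}\sum_{j\in Q}A_{ij}|\ge C$. Rescaling by $c$ gives $|\langle B,A_{P,Q}\rangle|\ge cC=\gamma$, which contradicts the separating bound $|\langle B,A_{P,Q}\rangle|\le\beta<\gamma$ on HAP matrices. That contradicts the original assumption that $S_1$ and $S_2$ are disjoint and completes the argument.

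The only delicate point is the separation step: I need \emph{strict} separation, not just a weak separating functional, so that the final chain $\beta<\gamma\le|\langle B,A_{P,Q}\rangle|\le\beta$ is genuinely contradictory. This is the reason to insist that $S_2$, being the symmetric convex hull of finitely many fixed matrices, is compact, and that $S_1$ is closed. Once the separating $B$ is in hand, the extraction of the constant diagonal and the application of the hypothesis to $B/c$ are purely formal, and the whole argument is a close structural analogue of the dualisation carried out in the proof of Proposition \ref{charikarconverse}.
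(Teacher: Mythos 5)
Your proof is correct and follows essentially the same Hahn--Banach separation argument as the paper: separate the diagonal matrices of trace at least $C$ from the symmetric convex hull of HAP matrices, deduce the separating matrix is constant $c>0$ on the diagonal, rescale by $1/c$, and apply the hypothesis to obtain a contradiction. The one small refinement you add is to justify \emph{strict} separation via compactness of the symmetric convex hull of the finitely many HAP matrices, a point the paper elides; note also that to make the step ``$c>0$'' fully rigorous you should observe that $0\in S_2$ forces $\beta\ge 0$ and hence $\gamma>0$.
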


\begin{proof}
Again we use the Hahn-Banach theorem. If no such diagonal matrix exists, then there is a linear functional, which we can represent as taking the inner product with a matrix $A$, that separates diagonal matrices of trace at least $C$ from convex combinations of HAP matrices and minus HAP matrices. If $\langle D,A\rangle\geq 1$ for every diagonal matrix $D$ of trace at least $C$, then $A$ must be constant on the diagonal and the constant must be at least $C^{-1}$. And if $|\langle B,A\rangle|< 1$ for every HAP matrix $B$, then for any two homogeneous arithmetic progressions $P$ and $Q$ we have $|\sum_{i\in P}\sum_{j\in Q}a_{ij}|<1$. And now if we choose $\lambda$ such that $\lambda A$ has 1s along the diagonal, then the matrix $\lambda A$ contradicts our hypothesis.
\end{proof}

In the light of this proposition (which is easily seen to be an equivalence) it is natural to make the following conjecture, which is yet another strengthening of the Erd\H os discrepancy problem.

\begin{conjecture} \label{matrixdiscrepancy}
For every $C$ there exists $N$ such that if $A=(a_{ij})$ is any real $N\times N$ matrix with 1s on the diagonal, then there exist homogeneous arithmetic progressions $P$ and $Q$ such that $|\sum_{i\in P}\sum_{j\in Q}a_{ij}|\geq C$.
\end{conjecture}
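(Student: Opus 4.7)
The plan rests on an equivalence already implicit in Propositions \ref{diagonalrep} and \ref{representationconverse}: Conjecture \ref{matrixdiscrepancy} at parameter $C$ for some $N$ is equivalent to the existence of an $N\times N$ diagonal matrix of trace at least $C$ inside the symmetric convex hull of HAP matrices. One direction is Proposition \ref{representationconverse}; the other is immediate from Proposition \ref{diagonalrep}, since if $D=\sum_k\lambda_kA_k$ is diagonal with $\mathrm{tr}(D)\geq C$ and $\sum_k|\lambda_k|\leq 1$, then for any $A$ with $1$s on the diagonal, $C\leq\langle A,D\rangle=\sum_k\lambda_k\langle A,A_k\rangle$, so some $|\langle A,A_k\rangle|\geq C$, and $\langle A,A_k\rangle=\sum_{i\in P}\sum_{j\in Q}a_{ij}$ for the HAPs $P,Q$ defining $A_k$. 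So the real task is to produce diagonal matrices of arbitrarily large trace inside the symmetric convex hull of HAP matrices.

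Before attempting any construction, I would symmetrize: since $(A+A^T)/2$ still has $1$s on the diagonal and since $\mathbf{1}_P^TA\mathbf{1}_Q+\mathbf{1}_Q^TA\mathbf{1}_P$ extracts only the symmetric part of $A$, it suffices to search for signed weights $\lambda_{P,Q}$ on unordered pairs of HAPs such that $\sum\lambda_{P,Q}(\mathbf{1}_P\mathbf{1}_Q^T+\mathbf{1}_Q\mathbf{1}_P^T)$ cancels off the diagonal and accumulates weight at least $C$ on it. This is a signed version of the representation problem raised after Proposition \ref{charikar}, and the heuristic that the diagonal weights $b_n$ should concentrate on integers with many divisors still applies, since such $n$ lie in many HAPs and so admit more room for off-diagonal cancellation. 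A direct attempt would start from a Möbius-style identity on divisors, lifted to HAPs, and then try to prune the resulting combination to achieve $\ell_1$-normalised weights.

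The main obstacle, which I do not expect to sidestep, is that the conjecture is equivalent to the Hilbert-space version (Conjecture \ref{hilbertversion}) and is therefore at least as hard as Erd\H os's original problem: a Hahn--Banach/compactness argument in the style of Proposition \ref{charikarconverse} converts a putative counterexample to Conjecture \ref{matrixdiscrepancy} into an infinite sequence of unit vectors in a Hilbert space with bounded HAP sums. The most promising route is then the multiplicative reduction described earlier in the paper: prove the $\ell_2$-averaged statement about completely multiplicative unit-modulus sequences (the conjecture stated just after Tao's proposition), and run that reduction in reverse. The hardest step in such a program will be controlling completely multiplicative sequences that pretend, in the Granville--Soundararajan sense, to be Dirichlet characters, since the $1,-1,0,1,-1,0,\dots$ example and the experimentally-long discrepancy-$2$ sequences both indicate that the extremal behaviour sits near characters and is defeated only by the cumulative effect of many small local perturbations, rather than by any single clean global obstruction.
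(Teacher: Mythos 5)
The statement in question is an open conjecture: the paper does not prove it, and you rightly do not claim to either. Your reformulation is correct and matches the paper's own remark that Proposition~\ref{representationconverse} is ``easily seen to be an equivalence'': Conjecture~\ref{matrixdiscrepancy} with parameter $C$ and size $N$ holds if and only if some $N\times N$ diagonal matrix of trace at least $C$ lies in the symmetric convex hull of HAP matrices, and your verification of the forward direction (take the Frobenius inner product of the decomposition with $A$ and use $a_{ii}=1$) is exactly right. The symmetrisation remark and the heuristic that the diagonal weight should concentrate on integers with many divisors are also consistent with the discussion in the paper, though of course the Möbius-style construction you gesture at is not carried out.

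Where you go wrong is the claim that Conjecture~\ref{matrixdiscrepancy} is \emph{equivalent} to the Hilbert-space version (Conjecture~\ref{hilbertversion}). A counterexample to Conjecture~\ref{matrixdiscrepancy} is a family of matrices with $1$s on the diagonal and bounded HAP-box sums, but nothing forces such a matrix to be positive semidefinite, so it cannot in general be realised as a Gram matrix $a_{ij}=\langle v_i,v_j\rangle$ of unit vectors. The Hahn--Banach/compactness argument you invoke ``in the style of Proposition~\ref{charikarconverse}'' extracts unit vectors precisely because in that proposition one separates against a cone that \emph{contains all positive semidefinite matrices}, which forces the separating functional $B$ to be PSD; in the HAP-matrix setting of Proposition~\ref{representationconverse} one separates only against HAP matrices, and the resulting functional is just a matrix with constant diagonal, with no positivity. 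The correct implications are: Conjecture~\ref{matrixdiscrepancy} implies Conjecture~\ref{hilbertversion} (set $a_{ij}=\langle v_i,v_j\rangle$ and apply Cauchy--Schwarz to the $P\times Q$ box sum), which in turn implies Conjecture~\ref{discrepancy}; the paper explicitly describes Conjecture~\ref{matrixdiscrepancy} as ``a very considerable strengthening,'' not an equivalent. This also undercuts your suggested route through the completely multiplicative reduction: Tao's reduction takes a bounded-discrepancy $\pm 1$ sequence to a multiplicative sequence, so proving the multiplicative $\ell_2$-average conjecture would give Conjecture~\ref{discrepancy}, but there is no analogous reduction starting from a general matrix with $1$s on the diagonal, so it would not settle Conjecture~\ref{matrixdiscrepancy}.
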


If we apply that conjecture in the case where $a_{ij}=\e_i\e_j$ for some $\pm 1$ sequence $(\e_1,\dots,\e_N)$, then the conclusion is that $|\sum_{i\in P}\e_i\sum_{j\in Q}\e_j|\geq C$, from which it follows that the sequence has discrepancy at least $\sqrt{C}$. Thus, the conjecture really is a strengthening of the Erd\H os discrepancy conjecture. Indeed, given how much weaker the condition of having 1s on the diagonal is than the condition of being a tensor product of two $\pm 1$ sequences, it is a very considerable strengthening. And yet it still appears to have a good chance of being true.

\section{conclusion}

The aim of this paper has been to give some idea of what is currently known about two notable conjectures of Erd\H os concerning arithmetic progressions. It has therefore been more about questions than answers, but Erd\H os would have been the last person to mind that. I imagine him sitting with ``the book" open at the relevant page, smiling at us as we struggle to find the proofs that he is now able to enjoy.

\end{document}